\DeclarePairedDelimiter\abs{\lvert}{\rvert}
\DeclarePairedDelimiter\norm{\lVert}{\rVert}
\let\oldabs\abs
\def\abs{\@ifstar{\oldabs}{\oldabs*}}
\let\oldnorm\norm
\def\norm{\@ifstar{\oldnorm}{\oldnorm*}}
\newtheorem{theorem}{Theorem}
\newtheorem{lemma}[theorem]{Lemma}
\newtheorem{corollary}[theorem]{Corollary}
\newtheorem{proposition}[theorem]{Proposition}
\newtheorem*{result}{Result}
\theoremstyle{definition}
\theoremstyle{remark}
\newtheorem*{remark}{Remark}
\numberwithin{theorem}{section}
\numberwithin{proposition}{section}
\numberwithin{lemma}{section}
\numberwithin{corollary}{section}
\numberwithin{equation}{section}
\numberwithin{conjecture}{section}
\setlist[enumerate,1]{before=}
\newcommand{\Log}{\operatorname{Log}}
\newcommand{\N}{\mathbb{N}}
\newcommand{\Z}{\mathbb{Z}}
\newcommand{\R}{\mathbb{R}}
\newcommand{\C}{\mathbb{C}}
\newcommand{\Q}{\mathbb{Q}}
\def\H{\mathbb{H}}
\renewcommand{\pmod}[1]{\  \,  \left( \mathrm{mod} \,  #1 \right)}
\begin{document}

\author{Giulia Cesana}
\address{University of Cologne, Department of Mathematics and Computer Science, Division of Mathematics, Weyertal 86-90, 50931 Cologne, Germany}
\email{gcesana@math.uni-koeln.de}

\author{William Craig}
\address{Department of Mathematics, University of Virginia, Charlottesville, VA 22904}
\email{wlc3vf@virginia.edu}

\author{Joshua Males}
\address{Department of Mathematics, Machray Hall, University of Manitoba, Winnipeg,
	Canada}
\email{joshua.males@umanitoba.ca}

\title[Asymptotic equidistribution]{Asymptotic equidistribution for partition statistics and topological invariants}

\begin{abstract}
We provide a general framework for proving asymptotic equidistribution, convexity, and log-concavity of coefficients of generating functions on arithmetic progressions. Our central tool is a variant of Wright's Circle Method proved by two of the authors with Bringmann and Ono, following work of Ngo and Rhoades. We offer a selection of different examples of such results, proving asymptotic equidistribution results for several partition statistics, modular sums of Betti numbers of two- and three-flag Hilbert schemes, and the number of cells of dimension $a \pmod{b}$ of a certain scheme central in work of G\"{o}ttsche.
\end{abstract}

\maketitle

\section{Introduction and statement of results}
A {\it partition} $\lambda$ of a non-negative integer $n$ is a list of non-increasing positive integrers, say \linebreak $\lambda=(\lambda_1, \lambda_2, \dots, \lambda_m)$, that satisfies $|\lambda|\coloneqq\lambda_1+\dots+\lambda_m= n$, and we let $p(n)$ denote the number of such partitions.
In 1918, Hardy and Ramanujan \cite{HarRam} proved 
$$ p(n) \sim \frac{1}{4\sqrt{3}n} \cdot e^{\pi\sqrt{\frac{2n}{3}}}$$
as $n\to \infty$, one of the most famous asymptotic formulae in partition theory. Their work marked the birth of the so-called Circle Method.

Half a century later Wright \cite{Wright1,Wright2} developed a modified version of the Circle Method which provides a general method for studying the Fourier coefficients of functions with known asymptotic behaviour near cusps. The essence of Wright's method is to use Cauchy's theorem to recover the coefficients as the integral over a circle of the generating function. One then splits the integral into two arcs, the major arc and minor arc, where the generating function has large growth and small relative growth, respectively. Even though this version of the Circle Method gives weaker bounds than the original techniques of Hardy and Ramanujan, it is more flexible when working with non-modular generating functions. It has been used extensively in the literature, see e.g.\@  \cite{BringMahl,KimKimSeo,Mao} for several examples closely related to the present paper.

Throughout mathematics, the equidistribution properties of certain objects are a central theme studied by many authors, including in areas of algebraic and arithmetic geometry \cite{ComMar,GreenTao,Katz} and number theory \cite{OppShu,Xi}. Recently, there has been a body of work in analogy with Dirichlet's theorem on the asymptotic equidistribution (or non-equidistribution) on arithmetic progressions of various objects. For example, the third author showed the asymptotic equidistribution of the partition ranks in \cite{Males}, Ciolan proved asymptotic equidistribution results for the number of partitions of n into $k$-th powers in \cite{Ciolan}, Gillman, Gonzalez, Ono, Rolen, and Schoenbauer proved asymptotic equidistribution for Hodge numbers and Betti numbers of certain Hilbert schemes of surfaces \cite{GGOLS}, and Zhou proved asymptotic equidistribution of a wide class of partition objects in \cite{Zhou}.

Another example is one of the second author and Pun \cite{CraigPun}. We let $\mathcal{H}_t(\lambda)$ denote the multiset of \textit{$t$-hooks}, those hook lengths which are multiples of a fixed positive integer $t$, of a partition $\lambda$. They investigated the $t$-hook partition functions
\begin{align*}
	p_t^e(n)\coloneqq \# \{ \lambda \text{ a partition of } n    :   \# \mathcal{H}_t(\lambda) \ {\text {\rm is even}} \}, \quad
	p_t^o(n)\coloneqq \# \{ \lambda \text{ a partition of } n   :    \# \mathcal{H}_t(\lambda) \ {\text {\rm is odd}}\},
\end{align*}
which divide the partitions of $n$ into two subsets, those with an even (resp. odd) number of $t$-hooks.
For even $t$, they proved that the partitions of $n$ are asymptotically equidistributed between these two subsets, while for odd $t$ they found the surprising phenomenon that they are not. Following this example, Bringmann, Ono, and two of the authors \cite{BCMO} showed that on arithmetic progressions modulo primes $t$-hooks are not asymptotically equdistributed, while the Betti numbers of two specific Hilbert schemes are. Their results centrally used a variant of Wright's Circle Method (see Proposition \ref{WrightCircleMethod}). 

The primary aim of this paper is for proving large families of Dirichlet-type equidistribution theorems. We begin by making more precise the meaning of a Dirichlet-type theorem. Suppose $c(n)$ is an arithmetic function which counts something of interest. Let $q=e^{-z}$, where $z=x+iy\in\C$ with $x>0$ and $|y|<\pi$. Furthermore let $\zeta =\zeta_b^a\coloneqq e^{\frac{2\pi i a}{b}}$ be a $b$-th root of unity for some natural number\footnote{The case $b=1$ is clearly trivial for coefficients that are integral.} $b\geq 2$ and $0\leq a < b$. Assume that we have a generating function on arithmetic progressions $a \pmod{b}$ given by
\begin{align} \label{equation: generating function H(a,b;q)}
	H(a,b;q) = \sum_{n \geq 0} c(a,b;n) q^n,
\end{align}
for some coefficients $c(a,b;n)$ such that
\begin{align}\label{equation: splitting H(a,b;q)}
	H(a,b;q) = \frac{1}{b}\sum_{j=0}^{b-1} \zeta_b^{-aj} H\left(\zeta_b^j;q\right),
\end{align}
for some generating functions $H(\zeta;q)$, with $H(q)\coloneqq H(1;q)=\sum_{n\geq0} c(n)q^n$. To say that equidistribution holds is to say that $c(a,b;n) \sim \frac 1b c(n)$ as $n \to \infty$. We are concerned with relating analytic properties of the functions $H(\zeta;q)$ to equidistribution results for $c(a,b;n)$. We provide a general framework for answering this question for a large class of generating functions by applying the spirit of Wright's Circle Method along with ideas of \cite{BCMO} (see Theorem \ref{Thm: Equidistribution} for a precise statement). Since our aim is to unify differing approaches to asymptotic equidistribution, we also collect many known or partially-known results and prove them using our framework, which may be summarized as follows.

\begin{result}
Assume that on both the major and minor arcs $H(q)$ dominates $H(\zeta;q)$, and $H(q)$ is dominant on the major arc as $q\to 1$. Then $c(a,b;n)$ are eqidistributed as $n\to\infty$.
\end{result}

Theorem \ref{Thm: ranks} is already known, Theorem \ref{Thm: crank} is partially known, while (to the best of the authors' knowledge) Theorems \ref{Thm: residual crank}, \ref{Thm: plane partitions}, \ref{Thm: Betti}, and \ref{Thm: cells} are new.

Because this method also naturally produces asymptotic formulas for the coefficients $c(a,b;n)$, we may also derive other interesting results, namely results about convexity and log-concavity. Convexity-type results of partition theoretic objects have been studied in recent years, for example in \cite{BO} Bessenrodt and Ono showed that if $n_1,n_2 \geq 1$ and $n_1 +n_2 \geq 9$ then 
\begin{equation*}
	p(n_1) p(n_2) > p(n_1+n_2).
\end{equation*}
A similar phenomenon for partition ranks congruent to $a \pmod b$, denoted by $N(a,b;n)$, was investigated by Hou and Jagadeeson \cite{HJ}, who gave an explicit lower bound on $n$ for convexity of $N(a,2;n)$. Confirming a conjecture of \cite{HJ}, the third author showed in \cite{Males} that for large enough $n_1,n_2$ we have
\begin{align*}
	N(a,b;n_1)N(a,b;n_2) > N(a,b;n_1+n_2).
\end{align*}
A direct corollary to Proposition \ref{WrightCircleMethod} shows that $c(a,b;n)$ arising from functions that satisfy the conditions of Proposition \ref{WrightCircleMethod} also satisfy the convexity result
\begin{align}\label{eqn: convexity}
	c(a,b;n_1)c(a,b;n_2) > c(a,b;n_1+n_2)
\end{align}
for large enough $n_1,n_2$. A further corollary yields that the coefficients are \textit{asymptotically log-concave}, i.e., for large enough $n_1,n_2$,
\begin{align}\label{eqn: log concav}
	c(a,b;n)^2 \geq c(a,b;n-1)c(a,b;n+1).
\end{align}
Such log-concavity results have been obtained for various arithmetic coefficients in the literature, including \cite{BJMR,DawMas,DeSalvoPak} among many others. In particular, all of the coefficients discussed in the following sections asymptotically satisfy \eqref{eqn: convexity} and \eqref{eqn: log concav}. To the best of the authors' knowledge, this gives new results for the first residual crank, traces of plane partitions, Betti numbers of the two- and three-flag Hilbert schemes we consider, as well as the cells of the scheme $V_{n,k}$ of G\"{o}ttsche, each defined in the following subsections.

\subsection{Partition statistics}
We next consider various statistics on partitions, beginning with the asymptotic equidistribution properties of two of the most famous partition statistics: the rank and the crank. 

In \cite{Ramanujan} Ramanujan proved that for $n\geq 0$
\begin{align*}
p(5n+4) \equiv 0 \pmod{5},\qquad
p(7n+5) \equiv 0 \pmod{7},\qquad
p(11n+6) \equiv 0 \pmod{11}.
\end{align*}
The \textit{rank} \cite{dyson} of a partition $\lambda$ is given by the largest part minus the number of parts. Dyson \cite{dyson} conjectured, and Atkin and Swinnerton-Dyer \cite{Arken} later proved, that the partitions of $5n + 4$ (resp.\@ $7n + 5$) form $5$
(resp.\@ $7$) groups of equal size when sorted by their ranks modulo $5$ (resp.\@ $7$), thereby combinatorially explaining two of Ramanujan's congruences.
Moreover, Dyson posited the existence of another statistic which should explain all Ramanujan congruences, which he called the \textit{crank}. The crank was later found by Andrews and Garvan \cite{AndGa,Garvan}, and is given by
\begin{align*}
	\begin{cases*}
		\text{largest part of } \lambda  & \text{ if $\lambda$ contains no ones}, \\
		\mu(\lambda) - \omega(\lambda) & \text{ if $\lambda$ contains ones},
	\end{cases*}
\end{align*}
where $\omega(\lambda)$ denotes the number of ones in $\lambda$ and $\mu(\lambda)$ denotes the number of parts greater than $\omega(\lambda)$.

The function $N(a,b;n)$, which is the number of partitions of $n$ with rank congruent to $a\pmod{b}$, was shown to be asymptotically equidistributed by the third author in \cite{Males}, making use of Ingham's Tauberian theorem and monotonicity properties\footnote{Since the proof in \cite{Males} used Ingham's Tauberian theorem, there was no error term.}, and may also be concluded from \cite{Bringmann}. We reprove this result.

\begin{theorem}\label{Thm: ranks}
	Let $0\leq a <b$ and $b \geq 2$. Then as $n\to\infty$ we have that
	\begin{align*}
		N(a,b;n) =\frac{1}{b}p(n) \left(1+O\left(n^{-\frac{1}{2}}\right)\right).
	\end{align*}
\end{theorem}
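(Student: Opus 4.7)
The plan is to apply Proposition \ref{WrightCircleMethod} to the two-variable rank generating function
\begin{align*}
	R(\zeta;q) \coloneqq \sum_\lambda \zeta^{\mathrm{rank}(\lambda)} q^{|\lambda|} = \sum_{n\geq 0} \frac{q^{n^2}}{(\zeta q;q)_n(\zeta^{-1}q;q)_n}.
\end{align*}
Since by orthogonality of roots of unity $N(a,b;n)$ fits the splitting in \eqref{equation: splitting H(a,b;q)} with $H(\zeta;q) = R(\zeta;q)$ and $H(q) = R(1;q) = \prod_{n\geq 1}(1-q^n)^{-1}$, the proof reduces to verifying the major and minor arc dominance hypotheses of Proposition \ref{WrightCircleMethod} for every nontrivial $b$-th root of unity $\zeta = \zeta_b^j$, $1 \le j \le b-1$.

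First I would record the classical major arc asymptotic
\begin{align*}
	R(1;e^{-z}) \sim \sqrt{\tfrac{z}{2\pi}} \exp\!\left(\tfrac{\pi^2}{6z}\right), \qquad z\to 0^+,
\end{align*}
obtained from the modular transformation of the Dedekind eta function. Next, for each nontrivial root of unity $\zeta=e^{2\pi i a/b}$, I would invoke the known cusp asymptotic of $R(\zeta;e^{-z})$, which follows from its expression as a Mordell-type integral coming from its mock modular structure (see for instance \cite{Bringmann}). The outcome is that $R(\zeta_b^j;e^{-z})$ has exponential growth rate $\frac{\pi^2}{6z}\bigl(1 - \frac{6j(b-j)}{b^2}\bigr)$, which is strictly less than $\frac{\pi^2}{6z}$ whenever $1 \le j \le b-1$. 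This gives the required exponential dominance of $R(1;q)$ over $R(\zeta_b^j;q)$ on the major arc.

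For the minor arc, I would combine the standard bound for $|(q;q)_\infty^{-1}|$ with straightforward estimates on the truncated twisted $q$-Pochhammer symbols $|(\zeta q;q)_n(\zeta^{-1}q;q)_n|$, whose modulus is bounded uniformly below by a positive power of $x$ away from the major arc; this yields the required relative smallness of $R(\zeta_b^j;q)$. Applying Proposition \ref{WrightCircleMethod} then produces, for each $j\ne 0$, an asymptotic for $[q^n]R(\zeta_b^j;q)$ whose exponential rate is strictly smaller than that of $p(n)$, so that the sum
\begin{align*}
	N(a,b;n) = \frac{1}{b}\, p(n) + \frac{1}{b}\sum_{j=1}^{b-1} \zeta_b^{-aj}[q^n]R(\zeta_b^j;q)
\end{align*}
immediately yields the stated equidistribution, with the $O(n^{-1/2})$ error being comfortably (in fact exponentially) absorbed.

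The main obstacle is purely technical: ensuring that the cusp asymptotics and minor arc estimates for $R(\zeta;q)$ hold with enough uniformity in $\zeta$ to feed into the Circle Method machinery. This analysis has essentially been carried out in \cite{Bringmann,BCMO}, so the work here is to package those inputs into the hypotheses of Proposition \ref{WrightCircleMethod} and quote the conclusion.
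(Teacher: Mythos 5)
Your overall strategy matches the paper's: decompose $\sum_n N(a,b;n)q^n$ via orthogonality of roots of unity into $\frac{1}{b}P(q)$ plus a twisted sum of $R(\zeta_b^j;q)$, then show the twisted terms are dominated on both arcs and quote the circle-method machinery. That part is fine. The problem is with the major-arc input you propose to feed into it.

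Your claimed asymptotic for $R(\zeta_b^j;e^{-z})$ near $z=0$, with exponential rate $\frac{\pi^2}{6z}\bigl(1-\frac{6j(b-j)}{b^2}\bigr)$, is not correct. The rank generating function is genuinely mock: near $q=1$, the exponential growth of $(q;q)_\infty^{-1}$ in the Appell--Lerch representation is exactly cancelled, and $R(\zeta;q)$ tends to a \emph{finite} limit rather than behaving like $e^{c/z}$ for nonzero $c$. The simplest check is $\zeta=-1$: $R(-1;q)=f(q)=\sum_{n\geq 0}q^{n^2}/(-q;q)_n^2$, Ramanujan's third-order mock theta function, tends to $4/3$ as $q\to 1^-$, while your formula predicts $R(-1;e^{-z})\asymp e^{-\pi^2/(12z)}$. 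The formula you wrote resembles what you would get for a product like $F_3(\zeta;q)^{-1}$ or the crank, where Theorem~\ref{Thm: asymptotics F} applies directly, but $R(\zeta;q)$ is not such a product and Theorem~\ref{Thm: asymptotics F} does not apply to it. Consequently your plan to ``apply Proposition~\ref{WrightCircleMethod} to each $R(\zeta_b^j;q)$'' also fails at the hypotheses: Proposition~\ref{WrightCircleMethod} requires an expansion $z^Be^{A/z}(\dots)$ with $A\in\R^+$, and $R(\zeta_b^j;q)$ has no positive exponential growth at $q=1$.

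The fix is exactly what the paper does and is simpler than what you attempted: do \emph{not} try to extract a precise cusp expansion for $R(\zeta;q)$. All that Theorem~\ref{Thm: Equidistribution} (and Corollary~\ref{Corol}) requires is that $R(\zeta_b^j;e^{-z}) = o\!\left((e^{-z};e^{-z})_\infty^{-1}\right)$ on the major arc and $|R(\zeta_b^j;q)| < |(q;q)_\infty^{-1}|$ on the minor arc. Since $R(\zeta_b^j;q)$ stays bounded (indeed the paper cites \cite{Males} for the statement $R(\zeta_b^j;q)\to 0$) while $(q;q)_\infty^{-1}$ grows like $\sqrt{z/(2\pi)}\,e^{\pi^2/(6z)}$, both conditions are immediate, and one applies Theorem~\ref{Thm: Equidistribution} and Proposition~\ref{WrightCircleMethod} once, to $H(a,b;q)$, not to the individual twisted pieces. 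Your proposal would still reach the right conclusion if the erroneous rate were replaced by this $o(\cdot)$ bound, but as written the claimed cusp asymptotic is a genuine error, and it is precisely the pitfall that the mock modularity of the rank introduces relative to the crank and the $F_3$-type products handled elsewhere in the paper.
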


In a similar vein, it is natural to consider the asymptotic behaviour of the crank on arithmetic progressions. For odd $b$, the asymptotic equidistribution is known by  Hamakiotes, Kriegman, and Tsai \cite{HaKrTs}, who used results on the asymptotic of cranks given by Zapata Rol\'{o}n in \cite{Rolon}. With our framework we are able to extend this result to all $b$. Note that our method is simpler than the full Circle Method, allowing us to easily extend to include the case of $b$ even. However, the asymptotic formulae obtained in \cite{HaKrTs} are far more precise than ours. Let $M(a,b;n)$ be the number of partitions of $n$ with crank congruent to $a \pmod{b}$.

\begin{theorem}\label{Thm: crank}
	Let $0\leq a <b$ and $b \geq 2$. Then as $n\to\infty$ we have that
	\begin{align*}
		M(a,b;n) = \frac{1}{b} p(n) \left(1+O\left(n^{-\frac{1}{2}}\right)\right).
	\end{align*}
\end{theorem}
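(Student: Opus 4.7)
The plan is to apply the framework developed above to the crank generating function, in close parallel with Theorem \ref{Thm: ranks}. The key input is the classical Andrews--Garvan two-variable crank generating function
\[
C(\zeta; q) = \prod_{n \geq 1} \frac{1 - q^n}{(1 - \zeta q^n)(1 - \zeta^{-1} q^n)} = \sum_{n \geq 0} \sum_{m \in \Z} M(m,n)\, \zeta^m q^n,
\]
which specializes to $C(1;q) = 1/(q;q)_\infty$, the generating function for $p(n)$. Setting $H(q) := 1/(q;q)_\infty$ and $H(\zeta;q) := C(\zeta;q)$ for roots of unity $\zeta \neq 1$, orthogonality of characters produces exactly the decomposition \eqref{equation: splitting H(a,b;q)} for $M(a,b;n)$, so it suffices to verify the major/minor arc hypotheses of Proposition \ref{WrightCircleMethod}.

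For the major arc, write $q = e^{-z}$ with $z = x + iy$, $x > 0$. The modular transformation of the Dedekind eta function yields the standard expansion
\[
\log \frac{1}{(q;q)_\infty} = \frac{\pi^2}{6z} + \tfrac{1}{2} \log \frac{z}{2\pi} + O(z),
\]
and an analogous Euler--Maclaurin expansion for $\zeta = \zeta_b^j$ with $1 \leq j \leq b-1$ gives
\[
\log C(\zeta_b^j; q) \sim \frac{\mathrm{Li}_2(\zeta_b^j) + \mathrm{Li}_2(\zeta_b^{-j}) - \pi^2/6}{z}.
\]
Using the Fourier identity $\mathrm{Li}_2(e^{2\pi i x}) + \mathrm{Li}_2(e^{-2\pi i x}) = 2\pi^2 B_2(x)$ valid on $[0,1]$, where $B_2(x) = x^2 - x + 1/6$, the numerator becomes $\pi^2 (2 B_2(j/b) - 1/6)$. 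Since $B_2(j/b) < 1/6$ for $1 \leq j \leq b-1$, the leading exponent is strictly smaller than $\pi^2/(6z)$, so $H(\zeta_b^j;q)$ is exponentially dominated by $H(q)$ on the major arc.

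For the minor arc I would factor $C(\zeta;q) = (q;q)_\infty \cdot (\zeta q;q)_\infty^{-1} (\zeta^{-1}q;q)_\infty^{-1}$ and use the elementary bound $|1 - \zeta q^n| \geq 1 - |q|^n$ together with the standard cusp bound for $(q;q)_\infty$ coming from $\eta$ modularity to obtain $|H(\zeta_b^j;q)| \ll |H(q)|$ with a genuine power saving uniformly on the minor arc. Proposition \ref{WrightCircleMethod} applied to $H(q)$ then recovers the Hardy--Ramanujan asymptotic for $p(n)$ with the claimed error, while applied to $H(\zeta_b^j;q)$ for $j \neq 0$ it produces a coefficient of exponentially smaller order than $p(n)$. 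Substituting into the splitting identity isolates $(1/b) p(n)$ from the $j = 0$ term, with the remaining $b-1$ contributions absorbed into the $O(n^{-1/2})$ error, giving the claimed equidistribution.

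The main obstacle I expect is making the minor arc estimate uniform across all $j$ when $b$ is even, since $\zeta = -1$ is then the nontrivial $b$-th root of unity closest to $1$, so the cancellations in $(\zeta q;q)_\infty$ are the weakest there; handling this case carefully is precisely what extends the equidistribution of \cite{HaKrTs} from odd $b$ to all $b \geq 2$.
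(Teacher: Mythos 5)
Your overall strategy matches the paper's: split the crank generating function via roots of unity, then verify the major/minor arc hypotheses of the framework. Your major arc calculation is essentially the paper's own argument in slightly different clothing. Note that $\zeta\Phi(\zeta,2,1)=\mathrm{Li}_2(\zeta)$, so the paper's real parts $\phi_1=\phi_1'=\frac{\pi^2}{6}-\frac{\pi^2 j}{b}\left(1-\frac{j}{b}\right)$ are exactly your $\pi^2 B_2(j/b)$, and both arguments reduce to the positivity of $\frac{j}{b}\left(1-\frac{j}{b}\right)$ for $1\le j\le b-1$. You should also observe that $\phi_2+\phi_2'=\mathrm{Im}\,(\mathrm{Li}_2(\zeta)+\mathrm{Li}_2(\bar\zeta))=0$, which the paper uses to kill the imaginary contribution; your version implicitly relies on the same cancellation.

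The genuine gap is on the minor arc, where the bound you propose provably fails. Writing $C(\zeta;q)=(q;q)_\infty\,(\zeta q;q)_\infty^{-1}(\zeta^{-1}q;q)_\infty^{-1}$ and applying $|1-\zeta q^n|\ge 1-|q|^n$ gives
\begin{align*}
|C(\zeta;q)|\le \frac{|(q;q)_\infty|}{(|q|;|q|)_\infty^2},
\end{align*}
and with $|q|=e^{-x}$ the denominator alone contributes growth of order $e^{\pi^2/(3x)}$, since $(|q|;|q|)_\infty^{-2}\sim \frac{x}{2\pi}e^{\pi^2/(3x)}$. Even with the sharpest available upper bound on $|(q;q)_\infty|$ (e.g. $|(q;q)_\infty|\le(-|q|;|q|)_\infty\sim\frac{1}{\sqrt 2}e^{\pi^2/(12x)}$), you get $|C(\zeta;q)|\ll e^{5\pi^2/(12x)}$, which is strictly larger than the required $e^{(\pi^2/6-\kappa)/x}$. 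The issue is structural: $|1-\zeta q^n|\ge 1-|q|^n$ discards all cancellation coming from the argument of $q$, and that cancellation is exactly what makes the minor arc estimate work. The paper instead invokes the argument from the cited proof of Theorem 1.4(2) of Bringmann--Craig--Males--Ono, which exploits this cancellation; that step cannot be replaced by the elementary pointwise bound.

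One further small correction: $\zeta=-1$ is the nontrivial $b$-th root of unity \emph{farthest} from $1$, not closest, and it is the easiest rather than the hardest case. In fact $(-q;q)_\infty$ stays bounded away from zero as $q\to 1$, so $C(-1;q)$ is \emph{exponentially} smaller than $P(q)$ on the major arc. The delicate cases are $\zeta=e^{\pm 2\pi i/b}$, where $\mathrm{Li}_2(\zeta)$ is closest to $\pi^2/6$ and the exponential gap between $C(\zeta;q)$ and $P(q)$ is narrowest.
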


In \cite{BriLo}, Bringmann, Lovejoy, and Osburn introduced two so-called residual cranks on overpartitions. Recall that an \textit{overpartition} is a partition where the first occurrence of each distinct number may be overlined. The \textit{first residual crank} of an overpartition is given by the crank of the subpartition consisting of the non-overlined parts. Let $\overline{M}(a,b;n)$ denote the number of overpartitions of $n$ whose first residual crank is congruent to $a\pmod{b}$.
\begin{theorem}\label{Thm: residual crank}
	Let $0\leq a <b$ and $b \geq 2$. Then as $n\to\infty$ we have that
	\begin{align*}
		\overline{M}(a,b;n) = \frac{1}{8bn}e^{\pi \sqrt{n}} \left(1+O\left(n^{-\frac{1}{2}}\right)\right).
	\end{align*}
\end{theorem}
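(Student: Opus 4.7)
The plan is to apply the framework of Theorem \ref{Thm: Equidistribution} by identifying the correct generating function $H(\zeta;q)$ and verifying the required major-arc and minor-arc hypotheses. Every overpartition decomposes uniquely as a strict partition (the overlined parts) together with an ordinary partition (the non-overlined parts), and by definition the first residual crank depends only on the latter. Hence, weighting by $\zeta$ raised to the first residual crank and multiplying by the generating function for strict partitions yields
\begin{align*}
H(\zeta;q) = (-q;q)_\infty \cdot \frac{(q;q)_\infty}{(\zeta q;q)_\infty (\zeta^{-1} q;q)_\infty},
\end{align*}
so that $H(q) \coloneqq H(1;q) = (-q;q)_\infty/(q;q)_\infty$ is the classical overpartition generating function; the decomposition \eqref{equation: splitting H(a,b;q)} is then just orthogonality of characters on $\Z/b\Z$.

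For the major arc, I would combine the standard modular asymptotic $(q;q)_\infty \sim \sqrt{2\pi/z}\exp(-\pi^2/(6z))$ as $z \to 0^+$ with the identity $(-q;q)_\infty = (q^2;q^2)_\infty / (q;q)_\infty$ to obtain $H(q) \sim \sqrt{z/(4\pi)}\exp(\pi^2/(4z))$. For a non-trivial $b$-th root of unity $\zeta = \zeta_b^j$, a standard Mellin-transform argument (analogous to those in \cite{BCMO}) gives
\begin{align*}
\log\!\left((\zeta q;q)_\infty (\zeta^{-1} q;q)_\infty\right) = -\frac{\operatorname{Li}_2(\zeta) + \operatorname{Li}_2(\zeta^{-1})}{z} + O(\log|z|) = -\frac{2\pi^2 B_2(j/b)}{z} + O(\log|z|),
\end{align*}
where $B_2(x) = x^2 - x + 1/6$. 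Thus $H(\zeta;q)$ has exponential factor $\exp(\pi^2(2B_2(j/b) - 1/12)/z)$, and since $B_2(j/b) < 1/6$ strictly for $0 < j/b < 1$, this growth rate is strictly smaller than $\pi^2/(4z)$; hence $H(q)$ dominates $H(\zeta;q)$ on the major arc.

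On the minor arc, the same bounds for $(q;q)_\infty$ and $(-q;q)_\infty$ used in the proofs of Theorems \ref{Thm: ranks} and \ref{Thm: crank} show that $H(q)$ is exponentially smaller than its major-arc value, and since $(\zeta q;q)_\infty (\zeta^{-1} q;q)_\infty$ in the denominator contributes at most polynomial growth in $1/|z|$, $H(\zeta;q)$ is negligible there as well. With both hypotheses of Theorem \ref{Thm: Equidistribution} verified, one obtains $\overline{M}(a,b;n) \sim (1/b)\overline{p}(n)$ with the claimed error, and Proposition \ref{WrightCircleMethod} applied to $H(q)$ with parameters $(\lambda,\alpha,B) = (1/\sqrt{4\pi},\,1/2,\,\pi^2/4)$ converts the leading asymptotic into the coefficient formula $\overline{p}(n) \sim e^{\pi\sqrt{n}}/(8n)$, yielding the theorem. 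The main technical hurdle is obtaining sufficiently uniform error terms in the major-arc expansion of $H(\zeta;q)$, but since $b$ is fixed and $\zeta$ ranges over only finitely many roots of unity, this reduces to the standard Wright-style saddle-point analysis already packaged in the cited framework.
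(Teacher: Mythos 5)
Your proposal is essentially correct and follows the same route as the paper. You derive the generating function
$H(\zeta;q) = (q^2;q^2)_\infty / \bigl[(\zeta q;q)_\infty (\zeta^{-1}q;q)_\infty\bigr]$
combinatorially (overpartition $=$ strict part $\times$ ordinary part, first residual crank living on the latter), whereas the paper simply cites Bringmann--Lovejoy--Osburn \cite[eq.\ (2.1)]{BriLo}; these agree since $(-q;q)_\infty (q;q)_\infty = (q^2;q^2)_\infty$. Your major-arc analysis via $\operatorname{Li}_2$ and $B_2$ is correct and equivalent to the paper's use of Theorem \ref{Thm: asymptotics F}(1), since $\zeta\Phi(\zeta,2,1) = \operatorname{Li}_2(\zeta)$ and $\operatorname{Li}_2(\zeta_b^j) + \operatorname{Li}_2(\zeta_b^{-j}) = 2\pi^2 B_2(j/b)$, and the resulting parameters $(A,B,\alpha_0) = (\pi^2/4,\,1/2,\,1/(2\sqrt{\pi}))$ match the paper's and produce the stated asymptotic $\overline{p}(n) \sim e^{\pi\sqrt{n}}/(8n)$.

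The one place your argument is loose is the minor arc: the claim that $(\zeta q;q)_\infty(\zeta^{-1}q;q)_\infty$ ``contributes at most polynomial growth in $1/|z|$'' on the whole minor arc is not obviously justified (this product can have small factors near various roots of unity on the unit circle). The cleaner observation, which is what the paper implicitly relies on when it says ``by a similar argument as before,'' is that
$H(\zeta;q)/H(1;q) = (q;q)_\infty^2 / \bigl[(\zeta q;q)_\infty(\zeta^{-1}q;q)_\infty\bigr]$
is \emph{identical} to the ratio $C(\zeta;q)/P(q)$ appearing in the crank proof, so the minor-arc dominance established there (following \cite{BCMO}) imports verbatim. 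With that substitution your argument is complete.
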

\begin{remark}
	One could obtain a similar result for the second residual crank of \cite{BriLo}, which we omit here for succinctness.
\end{remark}

Our framework applies to a larger realm than just the classical theory of partitions. In fact, we now demonstrate an example where we can prove equidistribution in congruence classes for a plane partition statistic. A \textit{plane partition} of $n$ (see e.g.\@ \cite{Andrews}) is a two-dimensional array $\pi_{j,k}$ of non-negative integers $j,k\geq1$, that is non-increasing in both variables, i.e., $\pi_{j,k} \geq \pi_{j+1,k}$, $\pi_{j,k} \geq \pi_{j,k+1}$ for all $j$ and $k$, and fulfils  $|\Lambda| \coloneqq  \sum_{j,k} \pi_{j,k} = n$. For example there are six plane partitions of $3$, which we list below using the standard visual representation of plane partitions.
\begin{align*}
	\begin{matrix}
		1 & 1 & 1 \quad \quad  & 1 & 1 \quad \quad & 1			  & \quad \quad 2 & 1 \quad \quad & 2              &  \quad \quad 3 \\
		&   &                 & 1 &                & 1            &   			  &                & 1             &   \\
		&   &                 &   &                & 1            &   			  &                &                 &
	\end{matrix}
\end{align*}
We let $\operatorname{pp}(n)$ denote the number of plane partitions of $n$, so $\operatorname{pp}(3) = 6$. Plane partitions were famously studied by MacMahon \cite{MacMahon}, who established the generating function
\begin{align*}
	\operatorname{PP}(q) \coloneqq  \sum_{n=0}^\infty \operatorname{pp}(n) q^n = \prod_{n=1}^\infty \dfrac{1}{(1 - q^n)^n} = 1 + q + 3q^2 + 6q^3 + 13q^4 + 24q^5 + \cdots.
\end{align*}
As with regular partitions, many authors have studied asymptotic properties of families of plane partitions and their statistics. For instance, in 1931 Wright \cite{Wright3} established the asymptotic formula
\begin{align} \label{WrightPlanePartitions}
	\operatorname{pp}(n) \sim \dfrac{\zeta(3)^{\frac{7}{56}}}{\sqrt{12\pi}} \left( \dfrac{n}{2} \right)^{-\frac{25}{36}} \exp\left( 3 \zeta(3)^{\frac{1}{3}} \left( \dfrac{n}{2} \right)^{\frac{2}{3}} + \zeta^\prime(-1) \right)
\end{align}
as $n \to \infty$, where $\zeta(s)\coloneqq \sum_{k=1}^\infty \frac{1}{k^s}$ with $\operatorname{Re}(s)>1$ is the \textit{Riemann zeta function}. One of the more famous statistics associated the plane partition $\Lambda = \{ \pi_{j,k} \}_{j,k \geq 1}$ is its {\it trace} $t(\Lambda)$, which is defined by
$$t(\Lambda) = \sum_{j=1}^\infty \pi_{j,j}.$$
In \cite{Stanley}, Stanley generalized MacMahon's generating function to a two-variable function which keeps track of the values of $t(\Lambda)$, proving
\begin{align*}
	\sum_\Lambda \zeta^{t(\Lambda)} q^{|\Lambda|} = \prod_{n=1}^\infty \dfrac{1}{(1 - \zeta q^n)^n}.
\end{align*}
Certain asymptotic properties of the trace have been studied by Kamenov and Mutafchiev \cite{Kamenov-Mutafchiev} and Mutafchiev \cite{Mutafchiev}, where the limiting distribution and expected value of $t(\Lambda)$ are considered. Here, we study the distribution of the trace in residue classes. In particular, for integers $0 \leq a < b$ we define the function $\operatorname{pp}(a,b;n)$ as the number of plane partitions of $n$ whose trace is congruent to $a \pmod{b}$, that is,
\begin{align*}
	\operatorname{pp}(a,b;n) \coloneqq  \# \{ \Lambda : |\Lambda| = n, t(\Lambda) \equiv a \pmod{b} \}.
\end{align*}
For example, from the plane partitions of 3 given above we can see that $\operatorname{pp}(0,2;3) = 2$ and $\operatorname{pp}(1,2;3) = 4$.
\begin{theorem}\label{Thm: plane partitions} 
	Let $0\leq a < b$ and $b \geq 2$. Then as $n \to \infty$ we have that
	\begin{align*}
		\operatorname{pp}(a,b;n) \sim \frac{1}{b} \operatorname{pp}(n) \sim \frac{1}{b} \dfrac{\zeta(3)^{\frac{7}{56}}}{\sqrt{12\pi}} \left( \dfrac{n}{2} 					\right)^{-\frac{25}{36}} \exp\left( 3 \zeta(3)^{\frac{1}{3}} \left( \dfrac{n}{2} \right)^{\frac{2}{3}} + \zeta^\prime(-1) \right).
	\end{align*}
\end{theorem}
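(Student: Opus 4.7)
The plan is to apply Proposition~\ref{WrightCircleMethod} to the two-variable refinement of MacMahon's product due to Stanley, namely
\begin{align*}
\operatorname{PP}(\zeta;q) = \sum_\Lambda \zeta^{t(\Lambda)} q^{|\Lambda|} = \prod_{n\geq 1}\frac{1}{(1-\zeta q^n)^n}.
\end{align*}
Setting $H(\zeta;q)\coloneqq \operatorname{PP}(\zeta;q)$ and $H(q)=\operatorname{PP}(q)$, the splitting \eqref{equation: splitting H(a,b;q)} gives
\begin{align*}
\operatorname{pp}(a,b;n) = \frac{1}{b}\operatorname{pp}(n) + \frac{1}{b}\sum_{j=1}^{b-1}\zeta_b^{-aj}\,[q^n]\operatorname{PP}(\zeta_b^j;q),
\end{align*}
and the leading term already produces the desired asymptotic via Wright's formula \eqref{WrightPlanePartitions}. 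It therefore suffices to show that each term in the remaining sum is of strictly smaller exponential order, which is precisely the conclusion of Proposition~\ref{WrightCircleMethod} once its major and minor arc hypotheses have been verified for $\operatorname{PP}(\zeta_b^j;q)$ with $1\leq j\leq b-1$.

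The major arc input I would extract via a Mellin transform argument: expanding the logarithm yields
\begin{align*}
\log\operatorname{PP}(\zeta;e^{-z}) = \sum_{k\geq 1}\frac{\zeta^k}{k}\sum_{n\geq 1} n e^{-knz},
\end{align*}
whose Mellin transform in $z$ equals $\Gamma(s)\zeta(s-1)\operatorname{Li}_{s+1}(\zeta)$. For $\zeta=\zeta_b^j$ with $1\leq j\leq b-1$ the polylogarithm $\operatorname{Li}_{s+1}(\zeta)$ is entire in $s$, so the rightmost singularity is the simple pole at $s=2$ coming from $\zeta(s-1)$. Shifting the contour past this pole gives
\begin{align*}
\log\operatorname{PP}(\zeta_b^j;e^{-z}) = \frac{\operatorname{Li}_3(\zeta_b^j)}{z^2} + O(1) \quad \text{as } z\to 0^+,
\end{align*}
while the same analysis with $\zeta=1$ recovers the leading term $\zeta(3)/z^2$ for $\log\operatorname{PP}(e^{-z})$, consistent with \eqref{WrightPlanePartitions}. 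Domination on the major arc then reduces to the elementary strict inequality
\begin{align*}
\operatorname{Re}\!\left(\operatorname{Li}_3(\zeta_b^j)\right) = \sum_{k\geq 1}\frac{\cos(2\pi jk/b)}{k^3} < \zeta(3),
\end{align*}
which holds because $\cos(2\pi j/b) < 1$ for $1\leq j\leq b-1$.

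For the minor arc I would estimate $|\operatorname{PP}(\zeta_b^j;e^{-z})|$ by comparing the product $\prod_{n\geq 1}|1-\zeta_b^j e^{-nz}|^{-n}$ with $\prod_{n\geq 1}(1-e^{-nx})^{-n} = \operatorname{PP}(e^{-x})$. The saving comes from the small-$n$ factors, where $|1-\zeta_b^j e^{-nz}|$ is bounded strictly below by $1-e^{-nx}$ whenever $|y|$ is not small compared to $x$; the remaining factors are controlled by the trivial lower bound $|1-\zeta_b^j e^{-nz}|\geq 1-e^{-nx}$. Combined with the saddle-point regime $x\asymp n^{-1/3}$ dictated by the major-arc exponent, this yields an exponent strictly smaller than $\zeta(3)/x^2$, meeting the requirements of Proposition~\ref{WrightCircleMethod}.

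The main obstacle I expect is the minor arc estimate rather than the major arc computation. Because the exponents $n$ in the product grow, factor-wise bounds rapidly become lossy, and one must argue carefully that the gain extracted from a bounded number of small-$n$ terms is strong enough to dominate the errors accumulated over the tail; this is the analogue of the delicate computations carried out in \cite{BCMO} for the Hilbert scheme Betti numbers. Once this step is complete, Proposition~\ref{WrightCircleMethod} applies and delivers the asymptotic stated in the theorem.
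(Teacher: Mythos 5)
Your major-arc computation via Mellin transforms is correct and runs parallel to the paper's Euler--Maclaurin treatment (both extract the leading $\operatorname{Li}_3(\zeta_b^j)/z^2$ term), and the observation $\operatorname{Re}(\operatorname{Li}_3(\zeta_b^j))<\zeta(3)$ for $1\leq j\leq b-1$ is the right domination criterion on the major arc. However, the opening plan---to ``apply Proposition~\ref{WrightCircleMethod} to the two-variable refinement''---does not work, and this is not a technical annoyance but a genuine mismatch. Proposition~\ref{WrightCircleMethod} assumes a major-arc expansion of the form $F(e^{-z}) = z^B e^{A/z}(\alpha_0 + \cdots)$ with $A\in\mathbb{R}^+$, i.e., simple-pole-type exponential growth $e^{A/z}$. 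For plane partitions the growth is $e^{\zeta(3)/z^2}$ (a double pole), which is why $\operatorname{pp}(n)$ grows like $\exp(cn^{2/3})$ rather than $\exp(c\sqrt{n})$. The proposition simply does not apply to $\operatorname{PP}(q)$, nor to $\operatorname{PP}(\zeta_b^j;q)$, and one cannot recover its conclusion by verifying its ``hypotheses'' for the twisted products. Likewise, the proposition would require $A\in\mathbb{R}^+$, whereas $\operatorname{Li}_3(\zeta_b^j)$ is in general complex.

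What the paper does instead---and what you should do---is invoke Theorem~\ref{Thm: Equidistribution} directly, which was designed to sidestep this exact issue. Its condition (3) asks only that the major-arc integral of $\operatorname{PP}(1;q)$ recovers $\operatorname{pp}(n)$ asymptotically, and this is precisely Wright's original result \eqref{WrightPlanePartitions}; no new circle-method analysis of the $e^{A/z^2}$ saddle is needed. Its conditions (1) and (2) are the soft dominance statements you have essentially established on the major arc via the $\operatorname{Li}_3$ comparison. For the minor arc you are over-engineering: Theorem~\ref{Thm: Equidistribution}(1) only requires an $O(H(1;e^{-z}))$ bound, not a quantitative exponential saving of the type Proposition~\ref{WrightCircleMethod}(2) would demand. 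It therefore suffices to note that
\begin{align*}
\left|\frac{\operatorname{PP}(\zeta_b^a;e^{-z})}{\operatorname{PP}(1;e^{-z})}\right| = \prod_{n\geq 1}\left|\frac{1-e^{-nz}}{1-\zeta_b^a e^{-nz}}\right|^n < 1
\end{align*}
for $|z|$ small, since $1-e^{-nz}\to 0$ while $1-\zeta_b^a e^{-nz}\not\to 0$. The ``delicate'' quantitative minor-arc estimate you were bracing for is not required once you use the softer central theorem; attempting it via Proposition~\ref{WrightCircleMethod} is both unnecessary and, because of the $e^{A/z^2}$ growth, structurally impossible.
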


There are a plethora of other partition statistics in the literature for which one could obtain similar theorems using our framework. For example, such results could be proved for more residual crank-like statistics \cite{JS}, ranks for overpartition pairs \cite{BriLo2}, or the full rank of $k$-marked Durfee symbols \cite{BGM}.

\subsection{Betti numbers of Hilbert schemes}
 In topology a fundamental goal is to determine whether two spaces have the same topological, differential, or complex analytic structure. Topological invariants are important tools for determining when spaces have different structure. A prominent example are Betti numbers, which count the dimension of certain vector spaces of differential forms of a manifold. Often, the generating function of the Betti numbers are related to modular forms. Two prominent examples were investigated by Bringmann, Ono, and two of the authors in \cite{BCMO}, where it was shown that the Betti numbers of the Hilbert scheme of $n$ points on $\C^2$ as well as its quasihomogenous counterpart are each (essentially) asymptotically equidistributed\footnote{Here we mean equidistributed up to a trivial modification which comes from the fact that certain Betti numbers in this setting are identically zero. See the definition of $d(a,b)$ as below.} as $n \to \infty$. Here we provide further examples of this phenomenon.

For a Hilbert scheme $X$, let $b_j(X) \coloneqq \dim(H_j(X,\Q))$ be the \textit{Betti numbers}. Here, $H_j(X,\Q)$ denotes the $j$-th homology group of $X$ with rational coefficients. Then the generating function in a formal variable $T$ for the Betti numbers is known as the \textit{Poincar\'{e} polynomial}, defined by\footnote{The reader should be aware that often the Poincar\'{e} polynomial is written in the formal variable $T^{\frac{1}{2}}$, which explains some apparent mismatches between the referenced sources for generating functions in Section \ref{Section: proofs} and those quoted in this paper.}
\begin{align*}
	P(X;T) \coloneqq \sum_{j} b_j(X) T^j = \sum_{j} \dim(H_j(X,\Q)) T^j.
\end{align*}
We consider the modular sums of Betti numbers on congruence classes $a \pmod{b}$, and define
\begin{align*}
	B(a,b;X) \coloneqq \sum_{j \equiv a \pmod{b}} b_j(X).
\end{align*}
Define the three-step flag Hilbert scheme by
\begin{align*}
	X_1 \coloneqq	\text{Hilb}^{n,n+1,n+2}(0) = \left\{\C[[x,y]] \supset I_n \supset I_{n+1} \supset I_{n+2}  \colon  I_k \text{ ideals with } \dim_\C^{\C[[x,y]]} / I_k =k\right\},
\end{align*}
and the two-step flag scheme
\begin{align*}
	X_2 \coloneqq \text{Hilb}^{n,n+2}(0) = \left\{\C[[x,y]] \supset I_n \supset I_{n+2}  \colon  I_k \text{ ideals with } \dim_\C^{\C[[x,y]]} / I_k =k\right\}.
\end{align*}
 Furthermore, let $(J,I)$ be a point in 
 \begin{align*}
 	\text{Hilb}^{n,n+2}\left(\C^2\right) \coloneqq \left\{ I_{n} \in \text{Hilb}^{n}\left(\C^2\right), I_{n+2} \in \text{Hilb}^{n+2}(\C^2) \colon I_{n} \supset I_{n+2} \right\},
 \end{align*}
where $\text{Hilb}^n(\C^2)$ denotes the usual Hilbert scheme of $n$ points over $\C^2$. Then $J,I$ are said to be \textit{trivially related} if $J/I \cong \C^2$ as trivial $\C[x,y]$ modules (see \cite[Definition 4.2.1]{Boc}). We also consider
 \begin{align*}
 	X_3 \coloneqq \text{Hilb}^{n,n+2}\left(\C^2\right)_{\text{tr}},
 \end{align*}
 which is the subspace of $\text{Hilb}^{n,n+2}(\C^2)$ of trivially related points (see also \cite{NaYo}).
For $m\in \N$, we also regard the certain perverse coherent sheaves (defined explicitly in \cite{NaYo2}), called $X_4 \coloneqq \widehat{M}^m(c_N)$ where $c_N$ is some prescribed homological data.

Let 
\begin{align*}
		d(a,b)\coloneqq \begin{cases} \frac{1}{b} \ \ \ \ \ &{\text {\rm if $b$ is odd,}}\\
		\frac{2}{b} \ \ \ \ \ &{\text {\rm if $a$ and $b$ are even,}}\\
		0 \ \ \ \ \ &{\text {\rm if $a$ is odd and $b$ is even.}}
	\end{cases}
\end{align*}
We prove the following result, which shows that the Betti numbers of these schemes are (essentially) asymptotically equidistributed.

\begin{theorem}\label{Thm: Betti}
	Let $0\leq a < b$ and $b \geq 2$. Then as $n\to \infty$ we have that
	\begin{align*}
	\frac{1}{2}	B(a,b;X_1) \sim B(a,b;X_2) \sim B(a,b;X_3)  = \frac{d(a,b) \sqrt{3}}{4\pi^2} e^{\pi\sqrt{\frac{2n}{3}}} \left(1+O\left(n^{-\frac{1}{2}}\right)\right)
	\end{align*}
and 
\begin{align*}
	 B(a,b;X_4) \sim \frac{d(a,b) n^{\frac{m-2}{2}}}{6^{\frac{1-m}{2}} 2 \sqrt{2}c_m \pi^m} e^{\pi\sqrt{\frac{2n}{3}}} \left(1+O\left(n^{-\frac{1}{2}}\right)\right)
\end{align*}
where $\prod_{j=1}^{m} \frac{1}{1- e^{-jz}} = \frac{1}{c_m z^m} + O(z^{-m+1})$.
\end{theorem}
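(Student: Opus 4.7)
The plan is to apply the general equidistribution framework developed earlier in the paper (Theorem \ref{Thm: Equidistribution}, together with Proposition \ref{WrightCircleMethod}) to each of the four schemes in turn. For each $i\in\{1,2,3,4\}$ one begins with the two-variable Poincar\'{e} polynomial generating series
\begin{equation*}
H(T;q) \coloneqq \sum_{n\geq 0} P(X_i; T)\, q^n,
\end{equation*}
which in every case is known as an eta-type infinite product: for $X_1$, $X_2$, $X_3$ the relevant formulas are recorded in the references to Nakajima--Yoshioka and Boccalini cited in the statement, while for $X_4$ one uses the formula of \cite{NaYo2}, whose leading behaviour at $q\to 1$ is controlled by the polynomial factor $\prod_{j=1}^m (1-e^{-jz})^{-1} = c_m^{-1} z^{-m}+ O(z^{-m+1})$ given in the theorem. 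Substituting $T=\zeta_b^j$ and averaging as in \eqref{equation: splitting H(a,b;q)} then recasts each $B(a,b;X_i)$ in the form needed to invoke the framework.

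The structural observation which forces the function $d(a,b)$ to appear is that each Poincar\'{e} polynomial $P(X_i;T)$ is supported in even powers of $T$. Consequently, among the substitutions $T = \zeta_b^j$ only two produce dominant contributions at the cusp $z\to 0$: namely $j=0$ and, when $b$ is even, $j = b/2$, which returns the same series by parity. For all other $j$, the standard dilogarithm estimate $\prod_{k\geq 1}(1-\zeta_b^j q^k)^{-1}\sim\exp(\operatorname{Li}_2(\zeta_b^j)/z)$ together with $\operatorname{Re}\operatorname{Li}_2(\zeta_b^j)<\pi^2/6$ shows these terms are exponentially subdominant. Averaging then produces precisely the prefactor $d(a,b)=\tfrac{1}{b}, \tfrac{2}{b}, 0$ according to the parities of $a$ and $b$.

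With the dominance verified, I would apply Proposition \ref{WrightCircleMethod} to $H(1;q)$ to extract the precise asymptotics. For $X_1, X_2, X_3$ the cusp behaviour at $z=0$ matches that of the Dedekind eta function up to the constants produced by the specific product formulas, giving the expected $e^{\pi\sqrt{2n/3}}$ growth with a purely numerical leading constant; the identity $\tfrac{1}{2}B(a,b;X_1)\sim B(a,b;X_2)\sim B(a,b;X_3)$ is then read off from the ratios of these constants, the factor $2$ for $X_1$ arising from its extra flag (which contributes an additional factor such as $1+T^2$ evaluating to $2$ at $T=1$). For $X_4$ the prefactor $\prod_{j=1}^m(1-e^{-jz})^{-1}$ introduces an extra $z^{-m}$ singularity at the cusp; the saddle-point integration in Wright's method then converts this into the polynomial factor $n^{(m-2)/2}$ together with the $c_m^{-1}$ and $6^{(1-m)/2}$ constants appearing in the theorem.

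The main obstacle lies in verifying the minor-arc hypothesis of Theorem \ref{Thm: Equidistribution}, namely that for every $\zeta = \zeta_b^j$ with $j\neq 0$ (and $j\neq b/2$ when $b$ is even) the series $H(\zeta;q)$ is strictly dominated by $H(1;q)$ on both the major and minor arcs. On the major arc this is the dilogarithm estimate mentioned above; on the minor arc one reduces to a bound of the form $|H(\zeta;q)|\ll |H(1;|q|)|\cdot (\text{polynomial decay})$, which for each of the four product formulas is handled by essentially the same analysis carried out for $\operatorname{Hilb}^n(\C^2)$ and its quasihomogeneous counterpart in \cite{BCMO}. Once these bounds are in place, Theorem \ref{Thm: Equidistribution} and Proposition \ref{WrightCircleMethod} together deliver both the equidistribution statement and the explicit asymptotic constants claimed.
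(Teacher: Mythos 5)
Your overall strategy matches the paper's: express each $B(a,b;X_i)$ via the orthogonality relation \eqref{equation: splitting H(a,b;q)}, observe that the Poincar\'{e} series depend only on $T^2$ (forcing the $d(a,b)$ prefactor via the $j=0$ and, when $b$ is even, $j=b/2$ terms), verify the dominance hypotheses of Theorem \ref{Thm: Equidistribution}, and extract the explicit asymptotics from Proposition \ref{WrightCircleMethod}. Your accounting of the $2\colon 1\colon 1$ ratio of rational prefactors for $X_1, X_2, X_3$ (the $1+\zeta^2$ numerator giving the extra $2$), and of the conversion of the extra $z^{-m}$ singularity for $X_4$ into the $n^{(m-2)/2}$ polynomial factor, is correct and consistent with the constants stated.

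However, there is a genuine gap in your dominance argument. You cite the dilogarithm-type estimate
\begin{align*}
\prod_{k \geq 1}\left(1 - \zeta_b^j q^k\right)^{-1} \sim \exp\left(\frac{\operatorname{Li}_2(\zeta_b^j)}{z}\right),
\end{align*}
which is the asymptotic of $F_1(\zeta;q)^{-1}$ (Theorem \ref{Thm: asymptotics F}(1), since $\zeta \Phi(\zeta,2,1) = \operatorname{Li}_2(\zeta)$). But none of the four generating functions $G_{X_i}(\zeta;q)$ contain a factor of $F_1$. Each of them is built from $F_3(\zeta^2;q)^{-1} = \prod_{n\geq 1}\bigl(1 - \zeta^{2(n-1)}q^n\bigr)^{-1}$ (times an elementary rational prefactor), which is a twisted product of a fundamentally different shape. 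The relevant estimate is Theorem \ref{Thm: asymptotics F}(2), which shows that for $\zeta^2$ a primitive $b'$-th root of unity with $b' \geq 2$ the product $F_3(\zeta^2;e^{-z})^{-1}$ grows like $\exp\bigl(\pi^2/(6 b'^2 z)\bigr)$ with a polynomial (not just constant) prefactor in $z$ — exponentially subdominant to $(q;q)_\infty^{-1} = F_3(1;q)^{-1} \sim \sqrt{z/(2\pi)}\,e^{\pi^2/(6z)}$. Your intuition that the nontrivial terms are subdominant is of course correct, but the estimate you invoke literally addresses the wrong object; as written, this step does not go through. You should instead quote Theorem \ref{Thm: asymptotics F}(2) (and the corresponding minor-arc bound carried out in \cite{BCMO}) for the $F_3$-products, as the paper does.
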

\begin{remark}
	It is possible to obtain further terms in the asymptotic expansion directly from the application of Theorem \ref{Thm: Equidistribution}, which  highlights the difference in lower-order terms of $B(a,b;X_j)$. Moreover, for $a$ odd and $b$ even, one may easily show that $B(a,b;X_j)$ identically vanish.
\end{remark}

Since many generating functions for topological invariants arise as infinite $q$-products, one may conclude similar results for many other functions. For example, in \cite{ManRol} Manschot and Zapata Rol\'{o}n investigated the asymptotics of the $\chi_y$-genera of Hilbert schemes of $n$ points on $K3$ surfaces, centrally using Wright's Circle Method. Since their generating function is a quotient of infinite $q$-products (see \cite[page 2]{ManRol}), it is likely that one may conclude similar equidistribution properties for these genera.

\subsection{A particular scheme of G\"{o}ttsche}

Let $\text{Hilb}_n(S)$ denote the Hilbert scheme which parametrises finite subschemes of length $n$ on a smooth projective surface $S$. We follow Fulton \cite{Fulton} and Ellingsrud--Str\o mme \cite{ES}, and say that a scheme $X$ has a \textit{cellular decomposition} if there is a filtration \linebreak $X =X_n \supset X_{n-1} \supset \dots \supset X_0 \supset X_{-1} = \varnothing $ by closed subschemes with each $X_{j} -X_{j-1}$ a disjoint union of schemes $U_{ij}$ isomorphic to certain affine spaces. Then the $U_{ij}$ are known as the \textit{cells} of the decomposition. 

Let $k$ be an algebraically closed field. Let $\bm{m}$ be the maximal ideal in $k[[x,y]]$, and define
\begin{align*}
	V_{n,k} \coloneqq \text{Hilb}_n\left( \text{spec}\left(  k[[x,y]] /\bm{m}^n\right) \right).
\end{align*}
The scheme $V_{n,k}$ was a central tool of G\"{o}ttsche in obtaining the famous formula for the Betti numbers of any Hilbert scheme of points on a smooth projective variety \cite{Got}, via the Weil conjectures. Let $v(a,b;n)$ count the number of cells of $V_{n,k}$ whose dimension is congruent to $a \pmod{b}$. 
\begin{theorem}\label{Thm: cells}
 Let $0 \leq a < b$ and $b \geq 2$. As $n\to \infty$ we have that
	\begin{align*}
		v(a,b;n) = \frac{1}{b}p(n) \left(1+O\left(n^{-\frac{1}{2}}\right)\right).
	\end{align*}
\end{theorem}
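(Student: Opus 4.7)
The plan is to reduce Theorem \ref{Thm: cells} to a direct application of Theorem \ref{Thm: Equidistribution}, with $c(n) = p(n)$ as the reference sequence. The first step is to identify the appropriate generating function. By G\"ottsche's construction (which realizes $V_{n,k}$ via a Bia\l{}ynicki-Birula-type cellular decomposition coming from a torus action, and is precisely the tool used to derive the Poincar\'e polynomials of Hilbert schemes of points), the cells of $V_{n,k}$ are naturally indexed by partitions $\lambda \vdash n$, and each such cell has a well-defined dimension $d(\lambda)$. In particular, the total number of cells of $V_{n,k}$ equals $p(n)$, which is consistent with the target asymptotic $v(a,b;n) \sim \tfrac{1}{b} p(n)$. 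I would define
\[
H(\zeta;q) \coloneqq \sum_{n \geq 0} \sum_{\lambda \vdash n} \zeta^{d(\lambda)} q^n,
\]
so that $H(1;q) = \sum_{n \geq 0} p(n) q^n = \prod_{n \geq 1}(1-q^n)^{-1}$, and by orthogonality
\[
\sum_{n \geq 0} v(a,b;n) q^n = \frac{1}{b} \sum_{j=0}^{b-1} \zeta_b^{-aj} H\!\left(\zeta_b^j;q\right),
\]
which is exactly the splitting \eqref{equation: splitting H(a,b;q)} required to apply the general machinery.

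Next I would obtain an explicit infinite product representation for $H(\zeta;q)$, reading off the exponents of $\zeta$ from G\"ottsche's description of $d(\lambda)$. Because the cellular decomposition is compatible with the product structure that yields MacMahon/G\"ottsche-style generating functions, $H(\zeta;q)$ should factor as an infinite $q$-product whose exponents on $\zeta$ are supported on a fixed arithmetic progression. This is the analytic input needed to estimate $H(\zeta_b^j;q)$ near $q = 1$.

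I would then verify the two analytic hypotheses of Theorem \ref{Thm: Equidistribution}. On the major arc, a standard Euler-Maclaurin expansion of $\log H(\zeta_b^j;q)$ as $q \to 1^-$ (as used, for instance, in the proofs of Theorems \ref{Thm: residual crank} and \ref{Thm: Betti}) will show that the leading exponential order $\pi^2/(6z)$ appearing in $\log H(1;q)$ is strictly reduced once $\zeta$ is a nontrivial root of unity, since the cancellations among $\zeta^{d(\lambda)}$ destroy the dominant pole at $z = 0$ by a positive factor determined by the Hurwitz zeta values at the residues modulo $b$. On the minor arc, the infinite product structure gives the standard exponential decay bounds, again in line with the other theorems in the paper.

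The main obstacle is the first step: pinning down the precise combinatorial formula for $d(\lambda)$ from G\"ottsche's work and thereby the product form of $H(\zeta;q)$, since only with an explicit product can the required asymptotic dominance $H(\zeta_b^j;q) \ll H(q)$ be checked cleanly. Once the product is in hand, the saddle-point estimate on the major arc and the trivial estimate on the minor arc are routine, and Theorem \ref{Thm: Equidistribution} combined with Hardy-Ramanujan's asymptotic for $p(n)$ yields the claimed
\[
v(a,b;n) = \frac{1}{b} p(n)\left(1 + O\!\left(n^{-\frac{1}{2}}\right)\right).
\]
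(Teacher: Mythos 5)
Your high-level framework is exactly the paper's: write the dimension-graded cell count as $H(\zeta;q) = \sum_{n\geq 0}\sum_{\lambda\vdash n}\zeta^{d(\lambda)}q^n$, use orthogonality to split $\sum_n v(a,b;n)q^n$ into $\frac 1b\sum_j \zeta_b^{-aj}H(\zeta_b^j;q)$, check dominance of the $j=0$ term on major and minor arcs, and then invoke Theorem~\ref{Thm: Equidistribution} (in fact Corollary~\ref{Corol}). You also correctly observe that the cells are indexed by partitions of $n$, so $H(1;q)=P(q)$.

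However, the step you flag as ``the main obstacle'' is a genuine gap, and it is the entire combinatorial content of the theorem. The paper does not need to rederive the cell dimensions or perform a fresh Euler--Maclaurin analysis; it simply quotes \cite[Proposition 4.2]{ES} and \cite[Proposition 2.8]{Got}, which give the explicit product
\begin{equation*}
V(\zeta;q) = \sum_{m,n\geq 0} v(m,n)\,\zeta^m q^n = \prod_{n\geq 1}\frac{1}{1-\zeta^{n-1}q^n} = F_3(\zeta;q)^{-1},
\end{equation*}
i.e.\ the cell attached to $\lambda$ has dimension $|\lambda|-\ell(\lambda)$. This identification is crucial for two reasons you cannot bypass: (i) it shows $H(\zeta;q)$ is precisely $F_3(\zeta;q)^{-1}$, whose major-arc asymptotics are already recorded in Theorem~\ref{Thm: asymptotics F}(2), so no new Euler--Maclaurin computation is needed; and (ii) the exponential order of $F_3(\zeta_b^j;e^{-z})$ is $\exp(\pi^2/(6b^2 z))$ for a primitive $b$-th root of unity, strictly smaller than the $\exp(\pi^2/(6z))$ from $P(e^{-z})$, which is what makes hypothesis (2) of Theorem~\ref{Thm: Equidistribution} hold. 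Without pinning down $d(\lambda)$ and the resulting product, one has no basis for asserting this dominance --- not every infinite $q$-product with $\zeta$-exponents on an arithmetic progression is subdominant to $P(q)$ (for instance, some choices of exponent would just give back a $\zeta$-independent $P(q)$ up to harmless factors, or could even have the same exponential order). So the sketch is a correct outline of the paper's strategy, but as written it does not constitute a proof: the decisive input is the Ellingsrud--Str\o mme/G\"ottsche formula for $V(\zeta;q)$, and supplying it reduces both the major- and minor-arc estimates to the ones already proved in \cite{BCMO} for $F_3$.
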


The paper is structured as follows. In Section 2 we recall relevant results from previous works in the literature. In Section 3 we then state our central theorem on the asymptotic equidistribution of coefficients of certain generating functions and show how convexity and log-concavity immediately follow from the asymptotics produced by Wright's Circle Method. Finally we prove the remaining theorems in Section 4.

\section*{Acknowledgements}
The second author thanks the support of the Thomas Jefferson Fund and the NSF (DMS-1601306 and DMS-2055118). The research of the third author conducted for this paper is supported by the Pacific Institute for the Mathematical Sciences (PIMS). The research and findings may not reflect those of the Institute. The authors would like to thank Walter Bridges for useful discussions on plane partitions as well as Kathrin Bringmann and Johann Franke for helpful comments on an earlier version of the paper.

\section{Preliminaries}

\subsection{Asymptotics of infinite $q$-products}

Here we recall the asymptotic behaviour of various infinite $q$-products.  One helpful tool is the modularity of the partition generating function
\begin{align*}
P(q) \coloneqq  \sum_{n=0}^\infty p(n)q^n = \frac{1}{(q;q)_\infty} = \frac{q^{\frac{1}{24}}}{\eta(\tau)},
\end{align*}
where we set $(a)_j =(a;q)_j \coloneqq  \prod_{\ell=0}^{j-1}(1-aq^\ell)$ for $j\in\N_0\cup \{\infty\}$, $q=e^{2\pi i\tau}$ and the Dedekind eta-function
$$\eta(\tau) \coloneqq  q^{\frac{1}{24}} \prod_{n=1}^{\infty} (1-q^n),$$
which is a modular form of weight $\frac 12$.
We also have \cite[Lemma 3.5]{BringDous}.
	\begin{lemma} \label{Lem: asymptotic P minor arc}
		Let $M>0$ be a fixed constant. Assume that $\tau=u+iv\in\H$, with $Mv\leq |u| \leq \frac 12$ for $u>0$ and $v\to 0$. We have that
		\begin{align*} 
			|P(q)| \ll \sqrt{v} \exp \left[\frac 1v \left(\frac{\pi}{12} -\frac{1}{2\pi}\left(1-\frac{1}{\sqrt{1+M^2}}\right)\right)\right].
		\end{align*}
	\end{lemma}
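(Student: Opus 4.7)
My plan is to push $|P(q)|$ through the modular transformation of the Dedekind eta function, and then estimate the resulting infinite product carefully under the minor arc condition.

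First, since $P(q)=q^{1/24}/\eta(\tau)$, the transformation $\eta(-1/\tau)=\sqrt{-i\tau}\,\eta(\tau)$ yields
\[
|P(q)| = e^{-\pi v/12}\,|\tau|^{1/2}\,|\eta(-1/\tau)|^{-1}.
\]
Writing $\tau'=-1/\tau$, whose imaginary part is $v/|\tau|^{2}$, and expanding $\eta(\tau')$ as its usual product gives
\[
|\eta(\tau')| = e^{-\pi v/(12|\tau|^{2})}\prod_{n\geq 1}\bigl|1-e^{2\pi i n \tau'}\bigr|.
\]

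Next, I would bound the infinite product from below by applying the elementary inequality $|1-z|\geq 1-|z|$ termwise, reducing to the real-variable Pochhammer symbol $\bigl(e^{-2\pi v/|\tau|^{2}};e^{-2\pi v/|\tau|^{2}}\bigr)_\infty$. A second application of the $\eta$ transformation, this time at a real argument, supplies the standard asymptotic $\bigl(e^{-x};e^{-x}\bigr)_\infty \sim \sqrt{2\pi/x}\,e^{-\pi^{2}/(6x)}$ as $x\to 0^{+}$, while for $x$ bounded away from $0$ this Pochhammer is bounded below by a positive constant. Combining everything gives an estimate of the schematic form
\[
|P(q)| \ll \sqrt{v}\,|\tau|^{-1/2}\exp\!\left(\frac{\pi v}{12|\tau|^{2}}+\frac{\pi|\tau|^{2}}{12v}-\frac{\pi v}{12}\right).
\]

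Finally, the minor arc condition $|u|\geq Mv$ translates to $|\tau|^{2}\geq (1+M^{2})v^{2}$, and $|u|\leq 1/2$ gives $|\tau|^{2}\leq 1/4+v^{2}$. Optimizing (or uniformly bounding) the exponent over this range, the dominant contribution is of order $\pi/(12(1+M^{2})v)$ in the regime of small $|u|$. The stated bound then follows by directly verifying the clean inequality
\[
\frac{\pi^{2}M^{2}}{6(1+M^{2})}+\frac{1}{\sqrt{1+M^{2}}}\geq 1, \qquad M\geq 0,
\]
which is exactly equivalent to $\pi/(12(1+M^{2})v) \leq \frac{1}{v}\bigl(\frac{\pi}{12}-\frac{1}{2\pi}(1-1/\sqrt{1+M^{2}})\bigr)$ and is easily checked (equality at $M=0$; positivity of the derivative controls the rest).

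The main obstacle I anticipate is controlling the infinite product uniformly in the full range of $|u|$. When $v/|\tau|^{2}$ is large (so $|u|\sim Mv$), the product is trivially bounded below by a positive constant, whereas when $v/|\tau|^{2}$ is small (so $|u|$ is of moderate size), one needs the sharper modular asymptotic for the real Pochhammer. A clean unified write-up likely requires splitting the minor arc into these two subcases and verifying that the bound of the lemma dominates the exponent in each.
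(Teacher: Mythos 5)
This lemma is not proved in the paper; it is cited directly from Bringmann--Dousse (\cite[Lemma~3.5]{BringDous}), so there is no internal proof to compare against. Your route --- pushing $P(q)$ through $\eta(-1/\tau) = \sqrt{-i\tau}\,\eta(\tau)$ and then applying the termwise bound $|1-w| \geq 1-|w|$ to reduce to a real Pochhammer at nome $e^{-2\pi v/|\tau|^2}$ --- is different from (what I believe to be) the standard argument, which isolates the $m=1$ term of $\log\frac{1}{(q;q)_\infty} = \sum_{m\geq 1}\frac{1}{m}\frac{q^m}{1-q^m}$, bounds the tail by $\log\frac{1}{(|q|;|q|)_\infty}-\frac{|q|}{1-|q|}$, and then lower-bounds the explicit saving $\frac{|q|}{1-|q|}-\big|\frac{q}{1-q}\big|$ on the minor arc; that route produces the $\sqrt{1+M^2}$ directly and avoids subcases. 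Your approach is sound and, after the case split you rightly anticipate (small versus large $v/|\tau|^2$), does yield the stated bound; both routes invoke the $\eta$-transformation once, just on different objects.

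One genuine flaw in your write-up: your justification of the key inequality is wrong. Writing $f(M) = \frac{\pi^2 M^2}{6(1+M^2)} + \frac{1}{\sqrt{1+M^2}}$, one computes
\[
f'(M) = \frac{M}{(1+M^2)^2}\left(\frac{\pi^2}{3} - \sqrt{1+M^2}\right),
\]
which is \emph{not} positive for all $M>0$: it changes sign at $M = \sqrt{\pi^4/9 - 1}\approx 3.13$, after which $f$ decreases toward $\pi^2/6$. So ``equality at $M=0$ plus positivity of the derivative'' does not prove $f\geq 1$. The inequality is nevertheless true, and the clean argument substitutes $t = 1/\sqrt{1+M^2}\in(0,1]$, giving $\frac{M^2}{1+M^2}=1-t^2$ and
\[
f(M) - 1 \;=\; \frac{\pi^2}{6}\bigl(1-t^2\bigr) + t - 1 \;=\; (1-t)\!\left[\frac{\pi^2}{6}(1+t) - 1\right]\;\geq\;0,
\]
since $\pi^2/6 > 1$ and $0< t\leq 1$. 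You should also be aware that the ``schematic form'' you display only reflects the small-$v/|\tau|^2$ regime; near the boundary $|u|\asymp Mv$ the quantity $v/|\tau|^2$ is large, so one instead uses that the real Pochhammer is bounded below by a positive constant, exactly as in your final paragraph. With those two repairs the proposal is a valid, if slightly longer, alternative proof of the lemma.
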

	\noindent This gives us the asymptotic behaviour of $P(q)$ on the so-called minor arc.

	Using the transformation property of $\eta$ we obtain the following classical asymptotic behaviour (see e.g.\@ \cite[equation (2.5)]{BCMO} with $k=1,h=0$ and shifting $z\mapsto \frac{z}{2\pi}$)
	\begin{align} \label{equ: asymptotic P major arc}
		\left(e^{-z}; e^{-z}\right)_\infty = \left(\frac{2\pi}{z}\right)^{\frac 12} e^{\frac{\pi}{12}\left(\frac{z}{2\pi}-\frac{2\pi}{z}\right)} \left(e^{-\frac{4\pi^2}{z}},e^{-\frac{4\pi^2}{z}} \right)_\infty,
	\end{align}
	for $z\in\C$ with $\operatorname{Re}(z)>0$.

Keeping the naming convention of \cite{BCMO}, we let
\begin{align*}
	F_1(\zeta; q)\coloneqq \prod_{n=1}^{\infty}\left(1-\zeta q^n\right), \qquad 
	F_3(\zeta; q)\coloneqq \prod_{n=1}^{\infty}\left(1-\zeta^{-1}(\zeta q)^n\right).
\end{align*}
Recall \textit{Lerch's transcendent}
\begin{align*}
	\Phi(z,s,a)\coloneqq \sum_{n=0}^\infty \frac{z^n}{(n+a)^s},
\end{align*}
and for $0\leq \theta < \frac{\pi}{2}$ define the domain $D_{\theta} \coloneqq \left\{ z=re^{i\alpha} \colon r \geq 0 \text{ and } |\alpha| \leq \theta \right\}$. Throughout, the \textit{Gamma function} is defined by $\Gamma(x)\coloneqq \int_0^\infty t^{x-1} e^{-t} dt $, for $\operatorname{Re}(x)>0$.
Then we have \cite[Theorem 2.1]{BCMO} (see also \cite{BFG} for the first case) which enables us to determine the asymptotics of $F_1$ and $F_3$ on major arcs.
\begin{theorem} \label{Thm: asymptotics F}
	For $b\geq 2$, let $\zeta$ be a primitive $b$-th root of unity. Then the following are true.
	\begin{enumerate}[leftmargin=*]
		\item[\rm (1)] As $z \to 0$ in $D_\theta$, we have 
		\begin{align*}
			F_{1}\left(\zeta;e^{-z}\right)  =\frac{1}{\sqrt{1-\zeta}} \, e^{-\frac{\zeta\Phi(\zeta,2,1)}{z}}\left( 1+O\left(|z|\right) \right).
		\end{align*}
		
		\item[\rm (2)] As $z\to 0$ in $D_\theta$, we have
		\begin{align*}
			F_3\left(\zeta;e^{-z}\right)= \frac{\sqrt{2\pi} \left(b^2z\right)^{\frac 12-\frac 1b}}{\Gamma\left(\frac{1}{b}\right)}
			\prod_{j=1}^{b-1}\frac{1}{(1-\zeta^j)^{\frac jb}}
			e^{-\frac{\pi^2}{6b^2z}}\left( 1+  O\left(|z|\right) \right).
		\end{align*} 
	\end{enumerate}
\end{theorem}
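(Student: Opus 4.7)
The proof is a Mellin--Barnes residue calculation applied to $L_j(z) := \log F_j(\zeta; e^{-z})$. For each part, the plan is to expand $\log(1-u) = -\sum_{m\geq 1} u^m/m$ and swap summations so that $L_j$ becomes a single series in $m$ times an elementary function; insert a Mellin representation for that function; shift the contour leftward past the poles at $s = 1$, $s = 0$, and (to sharpen the error) $s = -1$; and identify the residues with the claimed leading terms. The tail integral is $O(|z|)$ by standard polynomial growth bounds for $\Gamma(s)$, the Riemann zeta, the Hurwitz zeta, and the Lerch transcendent on vertical strips, combined with the rapid decay of $\Gamma$ on vertical lines.

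For part (1), direct computation yields
\[
L_1(z) = -\sum_{m\geq 1}\frac{\zeta^m}{m(e^{mz}-1)} = -\frac{1}{2\pi i}\int_{(c)}\Gamma(s)\,\zeta(s)\,\zeta\Phi(\zeta,s+1,1)\,z^{-s}\,ds,
\]
via the Mellin pair $(e^w-1)^{-1} \leftrightarrow \Gamma(s)\zeta(s)$ with $c>1$. The simple pole of the Riemann $\zeta(s)$ at $s=1$ contributes $-\zeta\Phi(\zeta,2,1)/z$, and the pole of $\Gamma(s)$ at $s=0$ contributes $-\tfrac{1}{2}\log(1-\zeta)$ using $\zeta(0) = -1/2$ and the identity $\zeta\Phi(\zeta,1,1) = -\log(1-\zeta)$. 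Exponentiation reproduces (1).

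For part (2), a parallel rearrangement gives $L_3(z) = -\sum_{\ell\geq 1}\frac{e^{-\ell z}}{\ell(1-\zeta^\ell e^{-\ell z})}$; after inserting $\frac{e^{-w}}{1-ae^{-w}} \leftrightarrow \Gamma(s)\Phi(a,s,1)$, expanding the Lerch transcendent as a power series in its first argument, swapping summations, and grouping by the residue class of the inner index modulo $b$, one obtains
\[
L_3(z) = -\frac{1}{2\pi i}\int_{(c)}\Gamma(s)\,z^{-s}\,A(s)\,ds, \qquad A(s) = b^{-s}\sum_{r=0}^{b-1}\zeta^r\Phi(\zeta^r,s+1,1)\,\zeta_H\!\left(s,\tfrac{r+1}{b}\right),
\]
where $\zeta_H(s,\alpha)$ denotes the Hurwitz zeta. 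At $s=1$, each $\zeta_H(s,(r+1)/b)$ has residue $1$, and the identity $\sum_{r=0}^{b-1}\zeta^r\Phi(\zeta^r,2,1) = \pi^2/(6b)$ (a consequence of $\sum_r \zeta^{rn} = b[b\mid n]$) furnishes the dominant singularity $-\pi^2/(6b^2 z)$. At $s=0$ only the $r=0$ summand is singular, since $\Phi(1,s+1,1) = \zeta(s+1)$ has a simple pole there; multiplying the Laurent expansions of $\Gamma(s)$, $b^{-s}$, $\zeta(s+1)$, and $\zeta_H(s,1/b)$ via $\zeta_H(0,a) = \tfrac{1}{2} - a$ and Lerch's formula $\zeta_H'(0,a) = \log\Gamma(a) - \tfrac{1}{2}\log(2\pi)$, together with $\zeta^r\Phi(\zeta^r,1,1) = -\log(1-\zeta^r)$ for $r\neq 0$ and $\prod_{r=1}^{b-1}(1-\zeta^r) = b$, recovers the $\log z$ term with coefficient $\tfrac{1}{2} - \tfrac{1}{b}$ and the constant matching (2) after exponentiation.

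The principal obstacle is the $s=0$ residue for part (2): the integrand has a double-pole structure there, so each factor must be expanded to order $s$, and the resulting constant depends on a subtle combination of Lerch's formula with the elementary sum $\sum_{r=1}^{b-1}\log(1-\zeta^r)\bigl((r+1)/b - \tfrac{1}{2}\bigr)$. Recovering both the coefficient $(1-2/b)\log b$ of $\log b$ (rather than $(1/2 - 1/b)\log b$) and the weighted product $\prod_{j=1}^{b-1}(1-\zeta^j)^{-j/b}$ requires careful bookkeeping using $\prod_r(1-\zeta^r) = b$. The contour-shift error estimate is routine once Stirling's formula is combined with convexity bounds for $\zeta(s)$ and $\zeta_H(s,a)$.
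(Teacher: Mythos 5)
The paper does not actually prove Theorem~\ref{Thm: asymptotics F} here --- it imports it wholesale as \cite[Theorem 2.1]{BCMO} (pointing to \cite{BFG} for part (1)). Those references arrive at the asymptotics by Euler--Maclaurin summation applied to the Laurent expansion of the summand in $\log F_j$, the same machinery the present paper later deploys explicitly for plane partitions via \eqref{eqn: EM}. Your Mellin--Barnes route is a genuinely different path to the same result: instead of regularizing $\sum_m F(mz)$ directly and reading off the expansion term by term, you convert $\log F_j(\zeta;e^{-z})$ into a vertical-line integral and extract the leading behaviour as residues. Your calculations check out. For (1), the chain $L_1(z)=-\sum_m \zeta^m/(m(e^{mz}-1))$, the kernel $\Gamma(s)\zeta(s)\,\zeta\Phi(\zeta,s+1,1)$, the $s=1$ residue $-\zeta\Phi(\zeta,2,1)/z$, and the $s=0$ residue $-\tfrac12\log(1-\zeta)$ (from $\zeta(0)=-\tfrac12$ and $\zeta\Phi(\zeta,1,1)=-\log(1-\zeta)$) are all correct. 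For (2), the rearrangement $A(s)=b^{-s}\sum_{r=0}^{b-1}\zeta^r\Phi(\zeta^r,s+1,1)\,\zeta_H(s,(r+1)/b)$ is right; the $s=1$ residue $\pi^2/(6b^2z)$ follows from $\sum_r\zeta^r\Phi(\zeta^r,2,1)=\pi^2/(6b)$; and the $s=0$ residue --- a double pole only for $r=0$ because $\Phi(1,s+1,1)=\zeta(s+1)$ --- correctly assembles from $\zeta_H(0,a)=\tfrac12-a$, Lerch's formula, $\prod_{r=1}^{b-1}(1-\zeta^r)=b$, and the splitting $\sum_{r=1}^{b-1}\log(1-\zeta^r)\bigl(\tfrac12-\tfrac{r+1}{b}\bigr)=(\tfrac12-\tfrac1b)\log b-\tfrac1b\sum_r r\log(1-\zeta^r)$. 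You correctly flag that the coefficient $(1-2/b)\log b$ comes from two separate sources (the $b^{-s}$ factor and the product identity), which is exactly the bookkeeping subtlety one must navigate. The trade-off between the two methods: Euler--Maclaurin hands you arbitrarily many lower-order terms in one pass, whereas Mellin--Barnes cleanly labels where each leading term originates but requires one additional contour shift per order of precision. Two details you should make explicit rather than allude to: absolute-convergence justifications for the sum/integral interchanges (routine for $\re(s)>1$), and the role of the hypothesis $|\arg z|\le\theta<\pi/2$ in the shifted-contour error bound, since $|z^{-s}|=|z|^{-\re s}e^{(\arg z)\,\im s}$ must be beaten by the $e^{-\pi|\im s|/2}$ decay of $\Gamma(s)$ for the tail integral to converge and be $O(|z|)$.
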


\subsection{Wright's Circle Method}

We require the following variant of Wright's Circle Method, which was proved by Bringmann, Ono, and two of the authors \cite[Proposition 4.4]{BCMO}, following work of Wright \cite{Wright2}, see also Ngo and Rhoades \cite{NgoRhoades}.

\begin{proposition} \label{WrightCircleMethod}
	Suppose that $F(q)$ is analytic for $q = e^{-z}$ where $z=x+iy \in \C$ satisfies $x > 0$ and $|y| < \pi$, and suppose that $F(q)$ has an expansion $F(q) = \sum_{n=0}^\infty c(n) q^n$ near 1. Let $N,M>0$ be fixed constants. Consider the following hypotheses:
	
	\begin{enumerate}[leftmargin=*]
		\item[\rm(1)] As $z\to 0$ in the bounded cone $|y|\le Mx$ (major arc), we have
		\begin{align*}
			F(e^{-z}) = z^{B} e^{\frac{A}{z}} \left( \sum_{j=0}^{N-1} \alpha_j z^j + O_M\left(|z|^N\right) \right),
		\end{align*}
		where $\alpha_j \in \C$, $A\in \R^+$, and $B \in \R$. 
		
		\item[\rm(2)] As $z\to0$ in the bounded cone $Mx\le|y| < \pi$ (minor arc), we have 
		\begin{align*}
			\lvert	F(e^{-z}) \rvert \ll_M e^{\frac{1}{\mathrm{Re}(z)}(A - \kappa)},
		\end{align*}
		for some $\kappa\in \R^+$. 
	\end{enumerate}
	If  {\rm(1)} and {\rm(2)} hold, then as $n \to \infty$ we have for any $N\in \R^+$ 
	\begin{align*}
		c(n) = n^{\frac{1}{4}(- 2B -3)}e^{2\sqrt{An}} \left( \sum\limits_{r=0}^{N-1} p_r n^{-\frac{r}{2}} + O\left(n^{-\frac N2}\right) \right),
	\end{align*}
	where $p_r \coloneqq  \sum\limits_{j=0}^r \alpha_j c_{j,r-j}$ and $c_{j,r} \coloneqq  \dfrac{(-\frac{1}{4\sqrt{A}})^r \sqrt{A}^{j + B + \frac 12}}{2\sqrt{\pi}} \dfrac{\Gamma(j + B + \frac 32 + r)}{r! \Gamma(j + B + \frac 32 - r)}$. 
\end{proposition}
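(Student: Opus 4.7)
The starting point is Cauchy's theorem, which recovers the coefficients as a contour integral
\begin{equation*}
c(n) = \frac{1}{2\pi i} \oint_{|q|=e^{-x}} \frac{F(q)}{q^{n+1}} \, dq = \frac{1}{2\pi} \int_{-\pi}^{\pi} F(e^{-x-iy}) e^{n(x+iy)} \, dy,
\end{equation*}
where $x>0$ is a free parameter. The exponent $A/z + nz$ from combining hypothesis (1) with $e^{nz}$ has a saddle at $z_0 = \sqrt{A/n}$, so I set $x \coloneqq \sqrt{A/n}$ and split the integral into the major arc $|y| \leq Mx$ and the minor arc $Mx < |y| \leq \pi$.

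The minor arc is handled directly: by hypothesis (2), the integrand is bounded in absolute value by $e^{nx + (A-\kappa)/x} = e^{2\sqrt{An} - \kappa\sqrt{n/A}}$, giving a contribution that is exponentially smaller than the claimed main term and therefore absorbable into any polynomially decaying error. The major arc is the substantive part. Substituting hypothesis (1), the contribution becomes
\begin{equation*}
\frac{1}{2\pi i} \int_{x-iMx}^{x+iMx} z^{B} e^{A/z + nz} \left(\sum_{j=0}^{N-1} \alpha_j z^j + O\bigl(|z|^N\bigr) \right) dz.
\end{equation*}

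I would evaluate each piece by identifying it, up to controlled errors, with a modified Bessel function. After extending the vertical segment to a Hankel-type contour $\mathcal{C}$ wrapping around the origin (the extra tails contribute negligibly by the same saddle-point estimate as on the minor arc, since $\operatorname{Re}(A/z + nz)$ decreases away from $z_0$), one applies the classical identity
\begin{equation*}
I_\nu(2\sqrt{An}) = \frac{1}{2\pi i} \left(\frac{n}{A}\right)^{\nu/2} \int_{\mathcal{C}} z^{-\nu-1} e^{A/z + nz} \, dz,
\end{equation*}
to rewrite each main-term integral as a Bessel function. The known asymptotic expansion
\begin{equation*}
I_\nu(2\sqrt{An}) \sim \frac{e^{2\sqrt{An}}}{2\sqrt{\pi}(An)^{1/4}} \sum_{r \geq 0} \frac{(-1)^r}{r!} \cdot \frac{\Gamma(\nu+\tfrac12+r)}{(4\sqrt{An})^r \, \Gamma(\nu + \tfrac12 - r)}
\end{equation*}
then yields, after collecting powers of $n$, precisely the stated shape with the coefficients $c_{j,r}$ and $p_r$ in the proposition (with $\nu = j+B+\tfrac12$). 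The $O(|z|^N)$ term in hypothesis (1) produces an integral that, by the same saddle-point analysis, is of order $n^{-N/2}$ relative to the main term, giving the stated error.

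The routine pieces are the Cauchy integral representation, the minor arc bound, and the Bessel asymptotics, which are standard. The main obstacle, and the step requiring the most care, is justifying the deformation from the vertical major-arc segment to the Hankel contour: one must verify that the exponent $\operatorname{Re}(A/z + nz)$ is small enough on the added arcs to absorb the polynomial factor $|z|^{B+j}$ and the contribution of the $O(|z|^N)$ remainder uniformly in the cone, and then match the Bessel asymptotic coefficients to the explicit closed form of $c_{j,r}$ by a direct manipulation of Gamma-function ratios.
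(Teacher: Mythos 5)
The paper does not prove this proposition; it cites it directly from \cite{BCMO} (following Wright \cite{Wright2} and Ngo--Rhoades \cite{NgoRhoades}), so there is no in-paper proof to compare against. That said, your sketch reconstructs precisely the argument the cited source carries out: Cauchy's formula on the circle of radius $e^{-\sqrt{A/n}}$, exponential suppression of the minor arc from hypothesis (2), deformation of the major-arc segment to a Hankel contour so that each term becomes a modified Bessel function, and then the classical asymptotic expansion of $I_\nu$ to extract $c_{j,r}$ and $p_r$. The structure is correct and complete in outline. Two bookkeeping slips should be fixed in a full write-up: the Bessel integral identity should carry the prefactor $(A/n)^{\nu/2}$ rather than $(n/A)^{\nu/2}$; and the Bessel order attached to the $j$-th summand is $\nu = j+B+1$ (equivalently $-(j+B+1)$, which yields the same asymptotic series), not $j+B+\tfrac12$ --- this can be cross-checked against the $\Gamma\!\left(j+B+\tfrac32+r\right)$ appearing in the definition of $c_{j,r}$. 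Neither slip affects the validity of the method.
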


\section{The central theorem}\label{section: central theorem}
Recall that we have the functions $H(a,b;q) $, $H(q)$, and $H(\zeta;q)$ as in \eqref{equation: generating function H(a,b;q)} and \eqref{equation: splitting H(a,b;q)}. We now prove a theorem regarding asymptotic equidistribution of the coefficients $c(a,b;n)$.
\begin{theorem} \label{Thm: Equidistribution}
Let $H(a,b;q)$ and $H(\zeta; q)$ be analytic on $|q|<1$, $|\zeta| = 1$ such that
$$H(a,b;q) = \dfrac{1}{b} \sum_{j=0}^{b-1} \zeta_b^{-aj} H(\zeta_b^j; q).$$
Suppose $c(a,b;n), c(n)$ are the Fourier coefficients of $H(a,b;q), H(1;q)$ respectively. Let $C = C_n$ be a sequence of circles centered at the origin inside the unit disk with radii $r_n \to 1$ as $n \to \infty$ that loops around zero exactly once. For $0<\theta $, let $\widetilde{C} \coloneqq C \cap D_\theta$ and $C \backslash \widetilde{C}$ be arcs such that the following hypotheses hold.
\begin{enumerate}[leftmargin=*]
\item[\rm(1)] As $z \to 0$ outside of $D_\theta$, we have
\begin{align*}
\sum_{j=1}^{b-1} \zeta_b^{-aj} H(\zeta_b^j; e^{-z}) = O\left( H(1; e^{-z}) \right).
\end{align*}
\item[\rm(2)] As $z \to 0$ in $D_\theta$, we have for each $1 \leq j \leq b-1$ that
\begin{align*}
H(\zeta_b^j; e^{-z}) = o\left( H(1; e^{-z}) \right).
\end{align*}
\item[\rm(3)] As $n \to \infty$, we have
\begin{align*}
c(n) \sim \dfrac{1}{2\pi i} \int_{\widetilde{C}} \dfrac{H(1;q)}{q^{n+1}} dq.
\end{align*}
\end{enumerate}
Then as $n \to \infty$, we have
$$c(a,b;n) \sim \dfrac{1}{b} c(n).$$
In particular, if $H(1;q)$ and $H(\zeta;q)$ satisfy the conditions of Proposition \ref{WrightCircleMethod} we have that
\begin{align*}
c(a,b;n) \sim \dfrac{1}{b} c(n) \sim \dfrac{1}{b} n^{\frac{1}{4}(- 2B -3)}e^{2\sqrt{An}} \left( \sum\limits_{r=0}^{N-1} p_r n^{-\frac{r}{2}} + O\left(n^{-\frac N2}\right) \right)
\end{align*}
as $n \to \infty$.
\end{theorem}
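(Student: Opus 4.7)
The plan is to combine Cauchy's integral formula with the orthogonality decomposition in the hypothesis and then bound each non-trivial term via the three arc hypotheses. First, I would write
\begin{align*}
c(a,b;n) = \frac{1}{2\pi i} \oint_C \frac{H(a,b;q)}{q^{n+1}}\, dq = \frac{1}{2\pi i\, b} \sum_{j=0}^{b-1} \zeta_b^{-aj} \oint_C \frac{H(\zeta_b^j;q)}{q^{n+1}}\, dq.
\end{align*}
The $j=0$ summand equals $\frac{1}{b} c(n)$ exactly by Cauchy's formula, so the task reduces to showing that each of the $b-1$ remaining contour integrals is $o(c(n))$ as $n \to \infty$.

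Next, I would split each remaining integral into its major arc piece on $\tilde{C} = C \cap D_\theta$ and its minor arc piece on $C \setminus \tilde{C}$. On the major arc, hypothesis (2) gives $H(\zeta_b^j;e^{-z}) = o(H(1;e^{-z}))$ uniformly as $z \to 0$ in $D_\theta$, so for $n$ large
\begin{align*}
\left| \int_{\tilde{C}} \frac{H(\zeta_b^j;q)}{q^{n+1}}\, dq \right| \leq o(1) \int_{\tilde{C}} \frac{|H(1;q)|}{|q|^{n+1}}\, |dq|,
\end{align*}
and the absolute integral on the right is of the same order as $c(n)$ itself, owing to the saddle-point concentration of $|H(1;q)|\,|q|^{-n-1}$ near $q = r_n$ encoded in hypothesis (3) (made quantitative by Proposition \ref{WrightCircleMethod} in the situations of interest). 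For the minor arc I would apply hypothesis (1) to the entire sum at once, yielding
\begin{align*}
\left| \sum_{j=1}^{b-1} \zeta_b^{-aj} \int_{C \setminus \tilde{C}} \frac{H(\zeta_b^j;q)}{q^{n+1}}\, dq \right| \ll \int_{C \setminus \tilde{C}} \frac{|H(1;q)|}{|q|^{n+1}}\, |dq|.
\end{align*}
Since Cauchy gives $c(n) = \frac{1}{2\pi i}\oint_C H(1;q)\, q^{-n-1}\, dq$ exactly while hypothesis (3) says the major arc already recovers $c(n)$ asymptotically, the minor arc integral on the right is forced to be $o(c(n))$. Combined with the major arc estimate, this gives $c(a,b;n) = \frac{1}{b} c(n) + o(c(n))$, which is precisely the claimed equidistribution. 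The explicit asymptotic in the second half of the theorem then follows immediately by substituting the expansion of $c(n)$ from Proposition \ref{WrightCircleMethod}.

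The main obstacle I anticipate is converting the pointwise little-$o$ statement in hypothesis (2) into the uniform bound needed to pull the $o(1)$ factor outside the integral over $\tilde{C}$, and then relating the absolute major arc integral $\int_{\tilde{C}} |H(1;q)|\,|q|^{-n-1}\, |dq|$ to the oscillatory integral $\int_{\tilde{C}} H(1;q)\, q^{-n-1}\, dq \sim c(n)$. This comparison is routine in Wright's method once one uses that $|H(1;q)|\,|q|^{-n-1}$ is sharply peaked at the positive real axis $q = r_n$, but it is the place where the abstract hypotheses must be supplemented by the saddle-point structure that Proposition \ref{WrightCircleMethod} supplies.
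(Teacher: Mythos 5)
Your argument is essentially the same as the paper's: Cauchy's theorem plus the orthogonality decomposition, a major/minor arc split on the contour $C$, and hypotheses (1)--(3) to show the non-principal contributions are negligible relative to $c(n)$. The small reorganization in which you pull out the $j=0$ term explicitly and estimate each $j \geq 1$ major-arc piece separately (keeping the sum together only on the minor arc, where hypothesis (1) requires it) changes nothing of substance. The subtlety you flag at the end --- that pointwise $o(H(1;e^{-z}))$ and $O(H(1;e^{-z}))$ bounds on integrands control $\int_{\widetilde C}|H(1;q)|\,|q|^{-n-1}\,|dq|$ but must then be related to the oscillatory integral $\int_{\widetilde C} H(1;q)\,q^{-n-1}\,dq \sim 2\pi i\, c(n)$ --- is indeed where the abstract hypotheses lean on the underlying saddle-point structure; the paper's own proof passes over this silently, treating pointwise comparisons as if they transferred directly to the complex contour integrals. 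Your caution is therefore well placed, and in the intended applications it is resolved exactly as you say: $H(1;q)$ is sharply peaked and essentially positive near the real axis on the major arc, so the absolute and oscillatory integrals agree up to bounded constants, and Proposition \ref{WrightCircleMethod} makes this quantitative.
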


\begin{proof}
By Cauchy's theorem and the decomposition of $H(a,b;q)$ we have
\begin{align*}
c(a,b;n) = \dfrac{1}{2\pi i} \int_C \dfrac{H(a,b;q)}{q^{n+1}} dq = \dfrac{1}{b} \left[ \dfrac{1}{2\pi i} \int_C \dfrac{\sum_{j=0}^{b-1} \zeta_b^{-aj} H(\zeta_b^j; q)}{q^{n+1}} dq \right].
\end{align*}
We now break down the integral over $C$ into the components $\widetilde{C}$ and $C\backslash\widetilde{C}$. Along $C\backslash\widetilde{C}$, we have by (1) that
\begin{align*}
\dfrac{1}{2\pi i} \int_{C\backslash\widetilde{C}} \dfrac{\sum_{j=0}^{b-1} \zeta_b^{-aj} H(\zeta_b^j; q)}{q^{n+1}} dq = O\left( \dfrac{1}{2\pi i} \int_{C\backslash\widetilde{C}} \dfrac{H(1;q)}{q^{n+1}} dq \right).
\end{align*}
From (3) along with Cauchy's integral formula for $c(n)$ that
it follows that
\begin{align*}
\dfrac{1}{2\pi i} \int_{C\backslash\widetilde{C}} \dfrac{H(1;q)}{q^{n+1}} dq = o\left( \dfrac{1}{2\pi i} \int_{\widetilde{C}} \dfrac{H(1;q)}{q^{n+1}} dq \right)
\end{align*}
as $n \to \infty$, and therefore
\begin{align*}
\dfrac{1}{2\pi i} \int_{C\backslash\widetilde{C}} \dfrac{\sum_{j=0}^{b-1} \zeta_b^{-aj} H(\zeta_b^j; q)}{q^{n+1}} dq = o\left( \dfrac{1}{2\pi i} \int_{\widetilde{C}} \dfrac{H(1;q)}{q^{n+1}} dq \right).
\end{align*}
On $\widetilde{C}$ we have by (2) that $\sum_{j=0}^{b-1} \zeta_b^{-aj} H(\zeta_b^j; q) = H(1;q) + o\left( H(1;q) \right)$, from which it follows that
\begin{align*}
\dfrac{1}{2\pi i} \int_{\widetilde{C}} \dfrac{\sum_{j=0}^{b-1} \zeta_b^{-aj} H(\zeta_b^j; q)}{q^{n+1}} dq \sim \dfrac{1}{2\pi i} \int_{\widetilde{C}} \dfrac{H(1;q)}{q^{n+1}} dq
\end{align*}
as $n \to \infty$. Therefore, combining the estimates along $\widetilde{C}$ and $C\backslash\widetilde{C}$ we have by (3) that
\begin{align*}
c(a,b;n) \sim \dfrac{1}{b} \left[ \dfrac{1}{2\pi i} \int_{\widetilde{C}} \dfrac{H(1;q)}{q^{n+1}} dq \right] \sim \dfrac{1}{b} c(n)
\end{align*}
as $n \to \infty$. This proves the first claim. If we now assume $H(1;q)$ and $ H(\zeta_b^j;q)$ satisfy the hypotheses of Proposition \ref{WrightCircleMethod}, then it is clear that each of (1) -- (3) are satisfied and the result follows by the asymptotic for $c(n)$ in Proposition \ref{WrightCircleMethod}.
\end{proof}

Using this result, we may immediately conclude asymptotic convexity for a large class of functions.
\begin{corollary}
Let $0\leq a<b$ and $b \geq 2$. Assume that $H(1;q)$ and $H(\zeta;q)$ satisfy the conditions of Proposition \ref{WrightCircleMethod}. Then for large enough $n_1,n_2$ we have that
\begin{align*}
	c(a,b;n_1)c(a,b;n_2) > c(a,b;n_1+n_2).
\end{align*}
\end{corollary}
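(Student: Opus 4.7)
The plan is to apply the precise asymptotic given by Theorem \ref{Thm: Equidistribution} (taking $N=1$) and reduce the convexity statement to a standard exponential-versus-polynomial comparison. Concretely, under the hypotheses of Proposition \ref{WrightCircleMethod} we have, as $n\to\infty$,
\[
c(a,b;n) \;=\; \frac{p_0}{b}\, n^{\frac{-2B-3}{4}}\, e^{2\sqrt{An}}\left(1 + O\!\left(n^{-\frac12}\right)\right),
\]
with $A>0$, $B\in\R$, and $p_0\neq 0$ as in the statement of Proposition \ref{WrightCircleMethod}. Note that $p_0\neq 0$ is automatic here because the dominant term is real and the coefficients $c(a,b;n)$ are nonnegative for $n$ large.

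The idea is then to form the ratio
\[
R(n_1,n_2) \;\coloneqq\; \frac{c(a,b;n_1+n_2)}{c(a,b;n_1)\,c(a,b;n_2)}
\;\sim\; \frac{b}{p_0}\,\frac{(n_1+n_2)^{\frac{-2B-3}{4}}}{(n_1 n_2)^{\frac{-2B-3}{4}}}\, e^{2\sqrt{A}\left(\sqrt{n_1+n_2}-\sqrt{n_1}-\sqrt{n_2}\right)},
\]
and show $R(n_1,n_2)\to 0$ as $\min(n_1,n_2)\to\infty$. The key quantitative input is the identity
\[
\sqrt{n_1}+\sqrt{n_2}-\sqrt{n_1+n_2} \;=\; \frac{2\sqrt{n_1 n_2}}{\sqrt{n_1}+\sqrt{n_2}+\sqrt{n_1+n_2}},
\]
which, assuming without loss of generality $n_1\le n_2$, is bounded below by a constant multiple of $\sqrt{n_1}$. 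Hence the exponential factor in $R$ decays at least like $\exp\!\left(-c\sqrt{\min(n_1,n_2)}\right)$ for some $c>0$, while the polynomial prefactor $(n_1+n_2)^{\alpha}/(n_1 n_2)^{\alpha}$ (with $\alpha=\tfrac{-2B-3}{4}$) is bounded polynomially in $n_1+n_2$. Exponential decay beats any polynomial growth, so $R(n_1,n_2)\to 0$, and thus $c(a,b;n_1)c(a,b;n_2)>c(a,b;n_1+n_2)$ once $n_1,n_2$ are sufficiently large.

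The main obstacle, modest as it is, lies in making the polynomial vs.\ exponential comparison uniform in the two variables $n_1$ and $n_2$: one must exclude the regime where one of the two variables is bounded while the other grows. Requiring both $n_1$ and $n_2$ to exceed some explicit threshold (depending on $A$, $B$, $p_0$, and the implied constants in the error term of Theorem \ref{Thm: Equidistribution}) handles this, since then $\min(n_1,n_2)$ itself is forced to be large, yielding a uniform lower bound on $\sqrt{n_1}+\sqrt{n_2}-\sqrt{n_1+n_2}$ that dominates the polynomial factor. Finally, the asymptotic log-concavity claim in \eqref{eqn: log concav} follows from the same argument applied to $n_1=n-1$, $n_2=n+1$ together with the elementary estimate $\sqrt{n-1}+\sqrt{n+1} = 2\sqrt{n}\,(1+O(n^{-2}))$, which, when inserted in the leading-order asymptotic, gives $c(a,b;n-1)c(a,b;n+1) = c(a,b;n)^2(1+O(n^{-1/2}))$ with the sign of the lower-order correction ultimately determined by the negative curvature of $n\mapsto 2\sqrt{An}$.
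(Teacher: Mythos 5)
Your proof is correct and follows the same route as the paper: apply the $N=1$ asymptotic from Wright's Circle Method to $c(a,b;n_1)$, $c(a,b;n_2)$, and $c(a,b;n_1+n_2)$, then compare the exponential main terms using the strict subadditivity of $n\mapsto\sqrt{n}$. You simply make explicit, via the algebraic identity for $\sqrt{n_1}+\sqrt{n_2}-\sqrt{n_1+n_2}$ and the remark on uniformity in $\min(n_1,n_2)$, what the paper compresses into the single line ``comparing the exponential growth of the main terms immediately yields the conclusion.''
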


\begin{remark}
The proof also works for the plane partition functions $\operatorname{pp}(a,b;n)$ by Wright's asymptotic formula \eqref{WrightPlanePartitions}. Higher order Tur\'{a}n inequalities for plane partitions have recently been studied by Ono and Pujahari \cite{OnPu}.
\end{remark}

\begin{proof}
	We use the description of the asymptotics of $c(a,b;n)$ from the proof of Theorem \ref{Thm: Equidistribution} for $N=1$. Then
	\begin{align*}
		c(a,b;n_1)c(a,b;n_2) = \frac{p_0^2}{b^2} (n_1n_2)^{\frac{1}{4} (-2B-3)} e^{2\sqrt{An_1} +2\sqrt{An_2}}  \left(1+O\left(\max\left(n_1^{-\frac 12}, n_2^{-\frac 12},(n_1n_2)^{-\frac 12}\right)\right)\right)
	\end{align*}
and
\begin{align*}
	c(a,b;n_1+n_2) = \frac{p_0}{b} (n_1+n_2)^{\frac{1}{4} (-2B-3)} e^{2\sqrt{A(n_1+n_2)}} \left(1+O\left((n_1+n_2)^{-\frac 12}\right)\right).
\end{align*}
Comparing the exponential growth of the main terms immediately yields the conclusion.
\end{proof}

A very similar calculation gives the following log-concavity result.
\begin{corollary}
	Let $0\leq a<b$ and $b \geq 2$. Assume that $H(1;q)$ and $H(\zeta;q)$ satisfy the conditions of Proposition \ref{WrightCircleMethod}. For large enough $n$, we have
	\begin{align*}
		c(a,b;n)^2 \geq c(a,b;n-1)c(a,b;n+1).
	\end{align*}
\end{corollary}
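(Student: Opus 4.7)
The plan is to follow the template of the preceding convexity corollary, but this comparison is finer. Unlike convexity, where the leading exponential rates $4\sqrt{An}$ versus $2\sqrt{A(n_1+n_2)}$ differ by a quantity of order $\sqrt{n}$, here the relevant comparison is between $e^{4\sqrt{An}}$ and $e^{2\sqrt{A(n-1)} + 2\sqrt{A(n+1)}}$, whose exponents differ only by
\[
4\sqrt{An} - 2\sqrt{A(n-1)} - 2\sqrt{A(n+1)} = \frac{\sqrt{A}}{2 n^{\frac{3}{2}}} + O\!\left(n^{-\frac{7}{2}}\right),
\]
by a Taylor expansion of $\sqrt{n \pm 1}$ around $\sqrt{n}$. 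Therefore we must resolve the expansion of $c(a,b;n)$ to enough terms to be sure this small positive exponent is not swallowed by the error in the Wright expansion.

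Accordingly, I would invoke Theorem \ref{Thm: Equidistribution} together with Proposition \ref{WrightCircleMethod} with $N = 4$, writing
\[
c(a,b;n) = \frac{1}{b}\, n^{\alpha} e^{2\sqrt{An}} \left( q_0 + q_1 n^{-\frac{1}{2}} + q_2 n^{-1} + q_3 n^{-\frac{3}{2}} + O\!\left(n^{-2}\right) \right),
\]
where $\alpha = \frac{1}{4}(-2B-3)$ and $q_0 = p_0 \neq 0$. (If $p_0$ should vanish, a standard modification using the first nonzero $p_r$ applies.) Taking logarithms and using $\log(1+x) = x - x^2/2 + \cdots$ yields an asymptotic
\[
\log c(a,b;n) = \log(q_0/b) + \alpha \log n + 2\sqrt{An} + \widetilde q_1 n^{-\frac{1}{2}} + \widetilde q_2 n^{-1} + \widetilde q_3 n^{-\frac{3}{2}} + O\!\left(n^{-2}\right),
\]
with constants $\widetilde q_j$ obtainable from the $q_j$'s.

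Applying the centered second difference $\Delta^2 f(n) \coloneqq f(n+1) + f(n-1) - 2 f(n)$ term by term via Taylor expansion gives
\[
\Delta^2(\alpha \log n) = -\frac{\alpha}{n^2} + O\!\left(n^{-4}\right), \qquad \Delta^2\!\left(2\sqrt{An}\right) = -\frac{\sqrt{A}}{2 n^{\frac{3}{2}}} + O\!\left(n^{-\frac{7}{2}}\right),
\]
\[
\Delta^2 \!\left( \widetilde q_j n^{-\frac{j}{2}} \right) = O\!\left(n^{-\frac{j}{2}-2}\right) \text{ for } j \geq 1, \qquad \Delta^2\!\left( O(n^{-2}) \right) = O\!\left(n^{-2}\right).
\]
Since $A > 0$ by hypothesis, the leading behaviour of $\Delta^2 \log c(a,b;n)$ is the strictly negative term $-\sqrt{A}/(2 n^{\frac{3}{2}})$, and every other contribution is $O(n^{-2})$ or smaller. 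Hence for $n$ sufficiently large $\Delta^2 \log c(a,b;n) < 0$, which is equivalent to $c(a,b;n)^2 \geq c(a,b;n-1)c(a,b;n+1)$.

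The conceptual content is straightforward, but the one point that requires care (and is the main technical obstacle) is the bookkeeping of error terms: a one-term Wright expansion is insufficient, because the main contribution to the second difference of the logarithm is of order $n^{-3/2}$, smaller than the relative $O(n^{-1/2})$ error in the leading-term asymptotic. One must therefore push the expansion to enough terms so that the truncation error, after taking logarithms and second differences, is genuinely of order smaller than $n^{-3/2}$; $N = 4$ as above suffices.
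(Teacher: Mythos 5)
Your proof is correct, and it makes explicit a subtlety that the paper elides when it states only that ``a very similar calculation gives the following log-concavity result.'' In the convexity corollary the paper invokes Proposition \ref{WrightCircleMethod} with $N=1$; if one repeats that naively here, the argument breaks down. Indeed, writing $c(a,b;n) = \frac{p_0}{b}\,n^{\alpha}e^{2\sqrt{An}}(1+E(n))$ with $E(n)=O(n^{-1/2})$, the ratio
\[
\frac{c(a,b;n)^2}{c(a,b;n-1)\,c(a,b;n+1)} = \left(1+\frac{\sqrt{A}}{2n^{3/2}}+O\!\left(n^{-2}\right)\right)\cdot \frac{(1+E(n))^2}{(1+E(n-1))(1+E(n+1))}
\]
has a favourable main term of size $1 + \Theta(n^{-3/2})$, but the final factor is controlled only by the crude bound $1+O(n^{-1/2})$, since the second difference of a function merely \emph{bounded} by $n^{-1/2}$ is itself only $O(n^{-1/2})$; no smoothness of $E$ is available from Proposition \ref{WrightCircleMethod}. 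Your insistence on taking $N=4$, so that the truncation error is $O(n^{-2})=o(n^{-3/2})$ after taking logarithms and second differences, is exactly what is needed, and your term-by-term estimates $\Delta^2(\alpha\log n)=O(n^{-2})$, $\Delta^2(2\sqrt{An})=-\tfrac{\sqrt{A}}{2}n^{-3/2}+O(n^{-7/2})$, and $\Delta^2(\widetilde{q}_j n^{-j/2})=O(n^{-j/2-2})$ are correct. The one additional hypothesis being used (which you rightly flag, and which holds in every application in the paper) is $p_0\neq 0$; without it the expansion degenerates and one must start from the first nonvanishing $p_r$. In short, this is the same circle-method-asymptotics approach as the paper's, but pushed to the order actually required, and it fairly completes a proof the paper leaves implicit.
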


We consider the case of partition statistics in slightly more detail. Let $s(\lambda)$ be a partition statistic, i.e., $s$ is a map from the set of all partitions to $\mathbb{Z}$, and let
\begin{align*}
	H_s(\zeta;q) = \sum_{\lambda} \zeta^{s(\lambda)} q^{|\lambda|}.
\end{align*}
Note that $H_s(1;q) = \sum_{\lambda} q^{|\lambda|}$ is the generating function of $p(n)$.
Then by orthogonality of roots of unity we have that (see e.g. \cite{Andrews})
\begin{align}\label{eqn: splitting}
	H_s(a,b;q) \coloneqq \frac{1}{b}\sum_{j=0}^{b-1} \zeta_b^{-aj} H_s\left(\zeta_b^j;q\right) = \sum_{\substack{\lambda \\ s(\lambda) \equiv a \pmod{b}}} q^{|\lambda|}.
\end{align}
A direct corollary of Theorem \ref{Thm: Equidistribution} is the following.
\begin{corollary}\label{Corol}
	Assume that $H_s(1;q)$ and $H_s(\zeta;q)$ satisfy the conditions of Theorem \ref{Thm: Equidistribution}, and let $s(a,b;n)$ count the number of partitions of $n$ with statistic $s$ congruent to $a \pmod{b}$. Then as $n\to \infty$ we have that $s(a,b;n) \sim \frac 1b p(n)$ as $n \to \infty$. If furthermore the conditions of Proposition \ref{WrightCircleMethod} are satisfied, we have the error term
	\begin{align*}
		s(a,b;n) = \frac{1}{b}p(n) \left(1+O\left(n^{-\frac{1}{2}}\right)\right).
	\end{align*}
\end{corollary}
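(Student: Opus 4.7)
The plan is to observe that this corollary is essentially a direct specialization of Theorem \ref{Thm: Equidistribution} to the partition-statistic setting, so the proof is a short deduction rather than a substantive new argument. First I would identify the relevant generating functions: by \eqref{eqn: splitting}, the function $H_s(a,b;q)$ has Fourier coefficients $s(a,b;n)$ and admits the root-of-unity decomposition
\begin{align*}
H_s(a,b;q) = \frac{1}{b}\sum_{j=0}^{b-1} \zeta_b^{-aj} H_s\!\left(\zeta_b^j;q\right),
\end{align*}
while $H_s(1;q) = \sum_n p(n)q^n$. Thus the role of $c(n)$ in Theorem \ref{Thm: Equidistribution} is played by $p(n)$, and the role of $c(a,b;n)$ by $s(a,b;n)$.

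Next I would invoke Theorem \ref{Thm: Equidistribution} under the assumption that $H_s(1;q)$ and $H_s(\zeta;q)$ satisfy its three hypotheses. The conclusion of that theorem is precisely $s(a,b;n) \sim \tfrac{1}{b}p(n)$ as $n \to \infty$, which gives the first (unquantified) statement of the corollary.

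For the quantitative refinement, I would further assume that $H_s(1;q)$ and $H_s(\zeta;q)$ satisfy the hypotheses of Proposition \ref{WrightCircleMethod}. Under this stronger assumption, the full asymptotic expansion from Proposition \ref{WrightCircleMethod} applied to $H_s(1;q)$ yields an expansion of $p(n)$ with an error term of size $O(n^{-\tfrac{N}{2}})$ for any fixed $N$, and the corresponding expansion of $s(a,b;n)$ produced by the concluding statement of Theorem \ref{Thm: Equidistribution} agrees with $\tfrac{1}{b}p(n)$ up to a multiplicative factor of $1+O(n^{-\tfrac12})$; taking $N=2$ in particular isolates the claimed error. This follows because the leading-order coefficient $p_0$ obtained from Proposition \ref{WrightCircleMethod} is identical for both expansions, and the next correction contributes at the claimed order $n^{-\tfrac12}$.

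There is no genuine obstacle here: the only thing to verify is the bookkeeping that the hypotheses of Theorem \ref{Thm: Equidistribution} (and, optionally, of Proposition \ref{WrightCircleMethod}) transfer unchanged to the partition-statistic setup via \eqref{eqn: splitting}, which is immediate. Thus the proof is essentially a one-line appeal to Theorem \ref{Thm: Equidistribution} together with the asymptotic formula coming from Proposition \ref{WrightCircleMethod}.
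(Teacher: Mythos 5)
Your proposal is correct and matches the paper's treatment exactly: the paper states this as a direct corollary of Theorem \ref{Thm: Equidistribution} without writing out a separate proof, and your deduction---identifying $H_s(1;q)$ with the partition generating function via \eqref{eqn: splitting}, applying Theorem \ref{Thm: Equidistribution} for the asymptotic equivalence, and appealing to Proposition \ref{WrightCircleMethod} for the $O(n^{-1/2})$ error term---is precisely the intended one-line argument.
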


\section{Proofs of Theorems \ref{Thm: ranks} to \ref{Thm: cells}}\label{Section: proofs}
In this section we prove each of the theorems from the introduction in turn. Each proof relies on the asymptotic equidistribution result concluded in Theorem \ref{Thm: Equidistribution}.

\subsection{Proof of Theorem \ref{Thm: ranks}}
	In accordance with \eqref{eqn: splitting}, we have
	\begin{align*}
		\sum_{n \geq 0} N(a,b;n) q^n =  \frac{1}{b} \sum_{n \geq 0} p(n) q^n + \frac{1}{b} \sum_{j = 1}^{b-1} \zeta_b^{-aj} R\left(\zeta_b^j ; q\right),
	\end{align*}
	where 
	\begin{align*}
		R\left( \zeta; q  \right) \coloneqq \sum_{\substack{n \geq 0 \\ m \in \Z}} N(m,n) \zeta^m q^n.
	\end{align*}
	To conclude the asymptotic equidistribution in the framework presented here, one needs only check that the conditions of Theorem \ref{Thm: Equidistribution} apply. Since the asymptotics of $(q;q)_\infty^{-1}$ follow from \eqref{equ: asymptotic P major arc} and satisfy the required properties on both the major and minor arcs, one simply needs to show that 
	\begin{align*}
		R\left( \zeta_b^j; q  \right) =   o\left( (q;q)^{-1}_\infty \right) , \qquad \left\lvert R\left( \zeta_b^j; q  \right) \right\rvert < \left\lvert (q;q)_\infty^{-1} \right\rvert,
	\end{align*}
	on the major arc and minor arcs respectively. In fact, in \cite{Males} it was shown that as $z \rightarrow 0$ with positive real part we have $R( \zeta_b^j; q )  \rightarrow 0$. Thus clearly each inequality is satisfied, the assumptions of Theorem \ref{Thm: Equidistribution} (and Corollary \ref{Corol}) apply, and we conclude the result.

\subsection{Proof of Theorem \ref{Thm: crank}}
	Let $M(m,n)$ denote the number of partitions of $n$ with crank $m$. In accordance with \eqref{eqn: splitting}, we have (see e.g. \cite[equation (3.2)]{Mahlburg})
	\begin{align*}
		\sum_{n \geq 0} M(a,b;n) q^n =  \frac{1}{b}  \sum_{n \geq 0} p(n) q^n + \frac{1}{b} \sum_{j = 1}^{b-1} \zeta_b^{-aj} C\left(\zeta_b^j ; q\right),
	\end{align*}
	where 
	\begin{align*}
		C\left( \zeta; q  \right) \coloneqq \sum_{\substack{n \geq 0 \\ m \in \Z}} M(m,n) \zeta^m q^n = \frac{(q;q)_\infty}{F_1\left(\zeta;q\right) F_1\left(\zeta^{-1};q\right)}.
	\end{align*}
	We have from Theorem \ref{Thm: asymptotics F}, as $z \to 0$ in $D_\theta$ (so on the major arc), for $q=e^{-z}$ and $\zeta$ a primitive $b$-th root of unity, that
	\begin{align*}
		F_{1}\left(\zeta;e^{-z}\right)  =\frac{1}{\sqrt{1-\zeta}} \, e^{-\frac{\zeta\Phi(\zeta,2,1)}{z}}\left( 1+O\left(|z|\right) \right).
	\end{align*}
	
Equation \eqref{equ: asymptotic P major arc} implies that on the major arc we have
	\begin{align*}
		\left(e^{-z};e^{-z}\right)_\infty^{-1} = \sqrt{\frac{z}{2\pi}} e^{\frac{\pi^2}{6z}} (1+O(|z|)),
	\end{align*}
	while Lemma \ref{Lem: asymptotic P minor arc} gives us 
	\begin{align*}
		\left|\left(e^{-z};e^{-z}\right)_\infty^{-1}\right| \le (x)^{\frac12} e^{\frac{\pi^2}{6x}-\frac{\mathcal{C}}{x}},
	\end{align*}
	for some $\mathcal{C}>0$ on the minor arc. 
	
	Moreover, one may conclude in a similar way to \cite[Proof of Theorem 1.4 (2)]{BCMO} that 
	\begin{align*}
		\left\lvert C\left( \zeta_b^j; q  \right)  \right\rvert< \left\lvert (q;q)_\infty^{-1} \right\rvert
	\end{align*}	
	on the minor arcs. For the major arcs we obtain that
	$$ 	C\left( \zeta_b^j; q  \right) =  o\left((q;q)_\infty^{-1} \right)$$ 
	holds if and only if
	$$ \left(\frac{\pi^2}{3}-\varepsilon-\phi_1 -\phi_1'\right)\frac{x}{|z|^2} >\left(\phi_2 +\phi_2'\right)\frac{y}{|z|^2},$$ 
	for $\phi_1+i\phi_2 \coloneqq \zeta_b^j\Phi(\zeta_b^j,2,1)$ and $\phi_1'+i\phi_2' \coloneqq \zeta_b^{-j}\Phi(\zeta_b^{-j},2,1)$. A straightforward calculation shows that $\phi_1 =\frac{\pi^2}{6} - \frac{\pi^2 j}{b} \left(1-\frac jb\right) = \phi_1'$ and $\phi_2=\phi_2'$. Therefore, our assumption reduces to
	$$ \left(\frac{2\pi^2 j}{b}\left(1-\frac jb\right)-\varepsilon\right)\frac{x}{|z|^2} >0,$$
	which holds, since we have $b>0$, $1\leq j\leq b-1$ and $x=\operatorname{Re}(z)>0$.
	Thus all the assumptions of Theorem \ref{Thm: Equidistribution} apply, and we conclude the result.

\subsection{Proof of Theorem \ref{Thm: residual crank}}
	 In \cite[equation (2.1)]{BriLo} it was shown that the generating function of the number of overpartitions of $n$ with residual crank $m$, denoted by $\overline{M}(m,n)$, is
	\begin{align*}
		C(\zeta;q) \coloneqq \sum_{\substack{n\geq 0\\m\in \Z}} \overline{M}(m,n)\zeta^mq^n = \frac{(q^2;q^2)_\infty}{F_1(\zeta;q)F_1(\zeta^{-1};q)}.
	\end{align*}
We thus have 
\begin{align*}
	C(a,b;q) \coloneqq \sum_{n \geq 0} \overline{M}(a,b;n)q^n = \frac{1}{b}\sum_{j=0}^{b-1} \zeta_b^{-aj} C\left(\zeta_b^j;q\right).
\end{align*}
By a similar argument as before, the asymptotic behaviour toward $z=0$ is dominated by the $j=0$ term on both the major and minor arcs. If $j=0$, we have 
\begin{align*}
	\frac{(q^2;q^2)_\infty}{(q;q)_\infty^2},
\end{align*}
which, using \eqref{equ: asymptotic P major arc} and standard arguments, is seen to satisfy the conditions of Proposition \ref{WrightCircleMethod} with $B=\frac{1}{2}$, $A=\frac{\pi^2}{4}$, and $\alpha_0 = \frac{1}{2\sqrt{\pi}}$. Then applying Theorem \ref{Thm: Equidistribution} yields the claimed result.

\subsection{Proof of Theorem \ref{Thm: plane partitions}}	
	Let $\operatorname{pp}(m,n)$ the number of plane partitions of $n$ with trace $m$. We have MacMahon's classical generating function \cite{MacMahon}
	$$\sum_{n=0}^\infty \operatorname{pp}(n) q^n = \prod\limits_{n=1}^\infty \dfrac{1}{(1 - q^n)^n}$$
	and the trace generating function \cite{Stanley}
	$$\operatorname{PP}(\zeta;q) \coloneqq  \sum_{n,m\geq 0} \operatorname{pp}(m,n) \zeta^m q^n = \prod\limits_{n=1}^\infty \dfrac{1}{(1 - \zeta q^n)^n}.$$
	Following the strategy of \cite{BCMO} we have, for $q=e^{-z}$ and $\zeta$ a $b$-th root of unity, that
	\begin{small}
		\begin{align*}
		\Log (\operatorname{PP}(\zeta;q)) = - \sum_{n=1}^\infty n \Log \left( 1 - \zeta q^n \right) = \sum_{n \geq 1} n \sum_{m \geq 1} \dfrac{\zeta^m q^{nm}}{m} = \sum_{m \geq 1} \zeta^m ~ \dfrac{q^m}{m(1 - q^m)^2} = z \sum_{m \geq 1} \zeta^m ~ \dfrac{q^m}{mz(1 - q^m)^2}.
	\end{align*}
	\end{small}
	Recall that the generating function for the Bernoulli numbers $B_n$ is given by (see e.g. \cite{BFOR})
	\begin{align*}
		\sum_{n=0}^\infty \dfrac{B_n}{n!} z^n = \dfrac{z}{e^z - 1} = \dfrac{z e^{-z}}{1 - e^{-z}}.
	\end{align*}
	If we define $B(z) \coloneqq  \frac{1}{z} \sum_{n=0}^\infty \frac{B_n}{n!} z^n = \frac{e^{-z}}{1 - e^{-z}}$, then differentiating gives us $B'(z)=-\frac{e^{-z}}{(1-e^{-z})^2}$ and yields the identity
	\begin{align*}
		\dfrac{-B^\prime(mz)}{mz} = \dfrac{e^{-mz}}{mz \left( 1 - e^{-mz} \right)^2}.
	\end{align*}
	Therefore, if we set $F(z) \coloneqq  \frac{- B^\prime(z)}{z}$, we obtain
	\begin{align*}
		\Log \left(\operatorname{PP}\left(\zeta;e^{-z}\right)\right) = z \sum_{m=1}^\infty \zeta^m F(mz).
	\end{align*}
	For $\zeta = \zeta_b^a \coloneqq  e^{\frac{2\pi i a}{b}}$ a $b$-th root of unity not equal to $1$ and by substituting $m\mapsto bm+j$ for $m\in\N_{0}$, $1\leq j\leq b$ this yields
	\begin{align}\label{eqn: PP(zeta;q)}
		\Log \left(\operatorname{PP}\left(\zeta_b^a ;e^{-z}\right)\right) = z \sum_{j = 1}^{b} \zeta_b^{aj} \sum_{m=0}^\infty F\left( \left( m + \frac jb \right) bz \right).
	\end{align}
	We turn to evaluating the inner sum. We note that $F(z)$ has the Laurent expansion
	\begin{align*}
		F(z) = \dfrac{- B^\prime(z)}{z} = \sum_{n = -3}^\infty \dfrac{-(n+2) B_{n+3}}{(n+3)!} z^n.
	\end{align*}
	By Euler-Maclaurin summation we have for $c_n \coloneqq  \frac{-(n+2) B_{n+3}}{(n+3)!}$ the identity (see e.g.\@ \cite[Lemma 2.2]{BCMO})
	\begin{align}\label{eqn: EM}
		\sum_{m=0}^\infty F\left( \left( m + \frac{j}{b} \right) bz \right) \sim \dfrac{\zeta\left( 3, \frac{j}{b} \right)}{b^3 z^3} + \dfrac{I_{F,1}^*}{bz} + \dfrac{1}{12bz} \left[ \Log(bz) + \psi\left( \frac{j}{b} \right) + \gamma \right] - \sum_{n=0}^\infty  c_n \dfrac{B_{n+1}\left( \frac{j}{b} \right)}{n+1} b^n z^n
	\end{align}
as $z \to 0$ in $D_\theta$.
	Here $\zeta(s,z)\coloneqq \sum_{n=0}^\infty \frac{1}{(n+z)^s}$ is the \textit{Hurwitz zeta function}, $\psi(x)\coloneqq \frac{\Gamma'(x)}{\Gamma(x)}$ is the \textit{digamma function}, $\gamma$ is the \textit{Euler-Mascheroni constant}, and for some $A\in\R^+$ we define
	$$ I^*_{F,A} \coloneqq \int_{0}^\infty \left(F(u)-\sum_{n=n_0}^{-2}c_n u^n -\frac{c_{-1} e^{-Au}}{u}\right) du.$$
	Here and throughout we say that
	$$f(z) \sim \sum_{n=0}^\infty a_nz^n,$$
	if $f(z) = \sum_{n=0}^N a_nz^n +O(|z|^{N+1}),$ for any $N\in\N_0$. 
	Applying \eqref{eqn: EM} to \eqref{eqn: PP(zeta;q)} and using that $\sum_{j=1}^b \zeta_b^{aj}=0$, we obtain
	\begin{align*}
		\Log \left(\operatorname{PP}\left(\zeta_b^a ; e^{-z}\right)\right) &\sim z \sum_{j = 1}^{b} \zeta_b^{aj} \left[ \dfrac{\zeta\left( 3, \frac{j}{b} \right)}{b^3 z^3} + \dfrac{I_{F,1}^*}{bz} + \dfrac{1}{12bz} \left[ \Log(bz) + \psi\left( \frac{j}{b} \right) + \gamma \right] - \sum_{n=0}^\infty  c_n \dfrac{B_{n+1}\left( \frac{j}{b} \right)}{n+1} b^n z^n \right] \\ 
		&= \dfrac{1}{b^3 z^2} \sum_{j=1}^{b} \zeta_b^{aj} \zeta\left( 3, \frac{j}{b} \right) + \dfrac{1}{12b} \sum_{j=1}^{b} \zeta_b^{aj} \psi\left( \dfrac{j}{b} \right) + O(|z|).
	\end{align*}
	We have the well-known identity (see e.g.\@ \cite[equation (2.4)]{BCMO})
	\begin{align*}
		\sum_{j=1}^b \zeta_b^{aj} \psi\left( \dfrac{j}{b} \right) =  b \Log\left( 1 - \zeta_b^a \right)
	\end{align*}
	and by elementary manipulations we furthermore obtain
	\begin{align*}
		\sum_{j=1}^{b-1} \zeta_b^{aj} \zeta\left( 3, \frac{j}{b} \right) = \sum_{j=1}^{b-1} \zeta_b^{aj} \sum_{n=0}^\infty \dfrac{b^3}{(bn+j)^3} = b^3 \operatorname{Li}_3\left( \zeta_b^a \right),
	\end{align*}
	where $\operatorname{Li}_3(z) = \sum_{k=1}^\infty \frac{z^k}{k^3}$ is the \textit{third polylogarithm function}. Therefore on the major arc, we conclude by exponentiating that, for $\zeta^a_b \not = 1$, we have
	\begin{align*}
		\operatorname{PP}\left(\zeta_b^a;e^{-z}\right) &= (1 - \zeta_b^a)^{ \frac{1}{12}} e^{\frac{\operatorname{Li}_3\left(\zeta_b^a \right)}{z^2}}  \left(1+O(|z
		|)\right)
	\end{align*}
	and otherwise by \cite{Wright3}
	\begin{align*}
		\operatorname{PP}\left(1; e^{-z}\right) &= z^{\frac{1}{12}} e^{\frac{\zeta(3)}{z^2}- \kappa} \left(1 + O(|z|) \right),
	\end{align*}
where $\kappa = \zeta'(-1) <0$.
	An analogous argument to the one of \eqref{eqn: splitting} yields that $\operatorname{PP}(a,b;q)$ and $\operatorname{PP}(\zeta;q)$ are analytic such that $$\operatorname{PP}(a,b;q)=\frac 1b \sum_{j=0}^{b-1} \zeta_b^{-aj} \operatorname{PP}\left(\zeta_b^j;q\right).$$
	Comparing exponents, we see that $\operatorname{PP}(\zeta_b^a; e^{-z}) = o( \operatorname{PP}(1; e^{-z}))$, and therefore the second hypothesis of Theorem \ref{Thm: Equidistribution} is true for $\operatorname{PP}(\zeta;q)$.
	
	We now consider $\operatorname{PP}(\zeta;q)$ on the minor arc. By definition, we have
	\begin{align*}
		\dfrac{\operatorname{PP}\left(\zeta_b^a; e^{-z}\right)}{\operatorname{PP}\left(1; e^{-z}\right)} = \prod\limits_{n=1}^\infty \left( \dfrac{1 - e^{-nz}}{1 - \zeta_b^a e^{-nz}} \right)^n.
	\end{align*}
	As $z \to 0$ with $\mathrm{Re}(z) > 0$, we see that $1 - e^{-nz} \to 0$ while $1 - \zeta_b^a e^{-nz} \not \to 0$. Thus, for all $z$ on the minor arc with $|z|$ sufficiently small, we see that $\left| \frac{\operatorname{PP}(\zeta_b^a; e^{-z})}{\operatorname{PP}(1; e^{-z})} \right| < 1$. This proves that the first hypothesis of Theorem \ref{Thm: Equidistribution} holds for $\operatorname{PP}(\zeta; q)$. The third condition of Theorem \ref{Thm: Equidistribution} follows by noting that the integral of $\operatorname{PP}(1;q)$ along the major arc gives Wright's asymptotic \eqref{WrightPlanePartitions}, and so equidistribution follows by Theorem \ref{Thm: Equidistribution}.

\subsection{Proof of Theorem \ref{Thm: Betti}}
	 For $X$ a Hilbert scheme, letting
	\begin{align*}
		G_X(T;q) \coloneqq \sum_{n \geq 0} P(X;T) q^n,
	\end{align*}
	a standard argument with orthogonality of roots of unity yields 
	\begin{align}\label{eqn: Betti splitting}
		\sum_{n \geq 0} B(a,b;X) q^n= \frac{1}{b}\sum_{r=0}^{b-1}\zeta_b^{-ar} G_X\left(\zeta_b^r;q\right) .
	\end{align}
	The main result of Boccalini's thesis \cite[equation (4.1)]{Boc} states that 
	\begin{align*}
		G_{X_1} (\zeta;q) = \sum_{n \geq 0} P\left( \text{Hilb}^{n,n+1,n+2}(0) ;\zeta\right) q^n =\frac{1+\zeta^2}{(1-\zeta^2 q)(1-\zeta^4q^2)}  F_3\left(\zeta^2;q\right)^{-1}.
	\end{align*}
	By \eqref{eqn: Betti splitting} we have that
	\begin{align*}
		H_{X_1}(a,b;q) \coloneqq \sum_{n\geq 0} B(a,b;X_1) q^n =\frac{1}{b}\left(1+(-1)^a \delta_{2\mid b}\right)G_{X_1}(1;q) + \frac{1}{b}\sum_{\substack{0\leq r< b-1 \\ r\neq \frac{b}{2}}} \zeta_b^{-ar}G_{X_1}\left(\zeta_b^r;q\right).
	\end{align*}
	
	Since 
	\begin{align*}
		G_{X_1}(1;e^{-z}) = \frac{2}{(1- e^{-z})(1-e^{-2z})} (e^{-z};e^{-z})_\infty^{-1} = (e^{-z};e^{-z})_\infty^{-1} \left(\frac{1}{z^2} + \frac{3}{2z} +\frac{11}{12} +O(z)\right),
	\end{align*}
	the asymptotic behaviour is essentially controlled by the Pochhammer symbol. It is then enough to show that on the major and minor arcs, $G_{X_1}(\zeta_b^r;q) = o(G_{X_1}(1;q))$ for $\zeta_b^r \neq 1$. This follows directly from the asymptotics of $F_3$ given in Theorem \ref{Thm: asymptotics F} in a similar fashion to \cite[Theorem 1.4 (1)]{BCMO} for the major arc, and a similar calculation to the arguments of \cite{BCMO} for the minor arc. Thus toward $z=0$ on the major arc we have
	\begin{align*}
		H_{X_1}(a,b;e^{-z}) = \frac{d(a,b)}{\sqrt{2\pi} z^{\frac{3}{2}}} e^{\frac{\pi^2}{6z}} (1+O(|z|)).
	\end{align*} 
	We are left to apply Proposition \ref{WrightCircleMethod} with $A= \frac{\pi^2}{6}, B=-\frac{3}{2}$ and $\alpha_0 = \frac{d(a,b)}{\sqrt{2\pi}}$ which yields that
	\begin{align*}
		B(a,b;X_1) = \frac{ \sqrt{3} d(a,b)}{2\pi^2} e^{\pi\sqrt{\frac{2n}{3}}} \left(1+O\left(n^{-\frac{1}{2}}\right)\right),
	\end{align*}
	from which one may also conclude asymptotic equidistribution.
	
	Similarly, it is shown in \cite[equation (4.2)]{Boc} that we have
	\begin{align*}
		G_{X_2}(\zeta;q) \coloneqq&	\sum_{n \geq 0} P\left( \text{Hilb}^{n,n+2}(0) ;\zeta\right) q^n = \frac{1+\zeta^2-\zeta^2 q}{(1-\zeta^2 q)(1-\zeta^4q^2)} F_3\left(\zeta^2;q\right)^{-1}.
	\end{align*}
	An analogous argument and application of Proposition \ref{WrightCircleMethod} to the case of $X_1$ holds. Using the generating functions \cite[equation (4.15)]{Boc} (which in turn cites \cite[Corollary 5.4]{NaYo}) and \cite[Corollary 5.4]{NaYo2}
	\begin{align*}
		& G_{X_3}(\zeta;q) \coloneqq \frac{1}{(1-\zeta^2 q)(1-\zeta^4q^2)} F_3\left(\zeta^2;q\right)^{-1}, \\
	& G_{X_4} (\zeta;q) \coloneqq F_3\left(\zeta^2;q\right)^{-1} \prod_{j=1}^{m} \frac{1}{1-\zeta^{2j} q^j} ,
\end{align*}
the cases for $X_3$ and $X_4$ follow in the same way.

\subsection{Proof of Theorem \ref{Thm: cells}}
	The results \cite[Proposition 4.2]{ES} and \cite[Proposition 2.8]{Got} show that $V_{n,k}$ has a cell decomposition and that, letting $v(m,n) \coloneqq \#\{ m\text{-dimensional cells of $V_{n,k}$} \}$, we have
	\begin{align*}
		V(\zeta;q) \coloneqq \sum_{m,n \geq 0} v(m,n) \zeta^m q^n = \prod_{n \geq 1} \frac{1}{1- \zeta^{n-1}q^n} = F_3(\zeta;q)^{-1}.
	\end{align*} 
	Then by orthogonality of roots of unity, we have
	\begin{align*}
		\sum_{n \geq 0} v(a,b;n)q^n = \frac{1}{b} \sum_{j=0}^{b-1} \zeta_b^{-aj} V\left(\zeta_b^j;q\right).
	\end{align*}
	Note that the $j=0$ term corresponds to $\frac{1}{b}(q;q)^{-1}_\infty$. Combining this with Theorem \ref{Thm: asymptotics F} (2), one can show in the same way as \cite[Theorem 1.4 (1)]{BCMO} that on both the major and minor arcs, the asymptotic behaviour of the $j=0$ term dominates as $z \to 0$. An application of Corollary \ref{Corol} immediately yields the asymptotic claimed.

\begin{bibsection}
	\begin{biblist}
\bibitem{Andrews} G. Andrews, \textit{The theory of partitions}, Cambridge University Press, no.\@ 2 (1998).	

	\bib{AndGa}{article}{
		AUTHOR = {Andrews, G.},
		author={Garvan, F.},
		TITLE = {Dyson's crank of a partition},
		JOURNAL = {Bull. Amer. Math. Soc. (N.S.)},
		FJOURNAL = {American Mathematical Society. Bulletin. New Series},
		VOLUME = {18},
		YEAR = {1988},
		NUMBER = {2},
		PAGES = {167--171},
	}
	
	 \bibitem{Arken} G. Arken, \textit{Modified Bessel functions}, Mathematical Methods for Physicists, 3rd ed., Orlando,
	 FL: Academic Press (1985), 610--616.

\bib{BO}{article}{
	author = {{Bessenrodt}, C.},
	author={{Ono}, K.},
	TITLE = {Maximal multiplicative properties of partitions}
 JOURNAL = {Ann. Comb.},
FJOURNAL = {Annals of Combinatorics},
VOLUME = {20},
YEAR = {2016},
NUMBER = {1},
PAGES = {59--64},
ISSN = {0218-0006},
}

\bibitem{Boc} D. Boccalini, \textit{Homology of the three flag Hilbert Scheme}, Ph.D. Thesis, Lausanne, EPFL (2016).

\bibitem{BFG} W. Bridges, J. Franke, and T. Garnowski, \textit{Asymptotics for the twisted eta-product and applications to sign changes in partitions}, preprint.

\bibitem{Bringmann} K. Bringmann, \textit{On the explicit construction of higher deformations of partition statistics}, Duke Math. J. {\bf 144} (2008), no.\@ 2, 195--233.
		
\bibitem{BCMO} K. Bringmann, W. Craig, J. Males, and K. Ono, \textit{Distributions on partitions arising from Hilbert schemes and hook lengths}, preprint.

\bibitem{BringDous} K. Bringmann and J. Dousse, \textit{On Dyson's crank conjecture and the uniform asymptotic behaviour of certain inverse theta functions}, Trans.  Amer. Math. Soc. {\bf 368} (2016), no.\@ 5, 3141--3155.

\bibitem{BFOR} K. Bringmann, A. Folsom, K. Ono, and L. Rolen, \textit{Harmonic Maass forms and mock modular forms: theory and applications} (Vol. 64), American Mathematical Soc. (2017).

\bibitem{BGM} K. Bringmann, F. Garvan, and K. Mahlburg, \textit{Partition statistics and quasiharmonic {M}aass forms}, Int. Math. Res. Not. IMRN {\bf 1} (2009), 63--97.

\bibitem{BJMR} K. Bringmann, C. Jennings-Shaffer, K. Mahlburg, and R. Rhoades, \textit{Peak positions of strongly unimodal sequences}, Trans. Amer. Math. Soc. {\bf 372} (2019), no.\@ 10, 7087--7109.

\bib{BriLo}{article}{
	AUTHOR = {Bringmann, K.},
	author={Lovejoy, J.},
	author={Osburn, R.},
	TITLE = {Rank and crank moments for overpartitions},
	JOURNAL = {J. Number Theory},
	FJOURNAL = {Journal of Number Theory},
	VOLUME = {129},
	YEAR = {2009},
	NUMBER = {7},
	PAGES = {1758--1772},
}

\bib{BriLo2}{article}{
	AUTHOR = {Bringmann, K.},
	author={Lovejoy, J.},
	TITLE = {Rank and congruences for overpartition pairs},
	JOURNAL = {Int. J. Number Theory},
	FJOURNAL = {International Journal of Number Theory},
	VOLUME = {4},
	YEAR = {2008},
	NUMBER = {2},
	PAGES = {303--322},
}

\bibitem{BringMahl} K. Bringmann and K. Mahlburg, \textit{Asymptotic inequalities for positive crank and rank moments}, Trans. Amer. Math. Soc. {\bf 366} (2014), no.\@ 2, 1073--1094.

\bibitem{Ciolan} A. Ciolan, \textit{Equidistribution and inequalities for partitions into powers}, preprint.

\bibitem{ComMar} D. Coman and G. Marinescu, \textit{Equidistribution results for singular metrics on line bundles}, Ann. Sci. \'{E}cole Norm. Sup. (4) {\bf 48} (2015),  no.\@ 3, 497--536.

\bibitem{CraigPun} W. Craig and A. Pun, \textit{Distribution properties for $t$-hooks in partitions}, Ann. Comb., accepted for publication.

\bibitem{DawMas} M. Dawsey and R. Masri, \textit{Effective bounds for the Andrews spt-function}, Forum Math. {\bf 31} (2019), no.\@ 3, 743--767.

\bibitem{DeSalvoPak} S. DeSalvo and I. Pak, \textit{Log-concavity of the partition function}, Ramanujan J. {\bf 38} (2015), no.\@ 1, 61--73.

	\bib{dyson}{article}{
		AUTHOR = {Dyson, F.},
		TITLE = {Some guesses in the theory of partitions},
		JOURNAL = {Eureka},
		FJOURNAL = {Eureka. The Archimedeans' Journal},
		NUMBER = {8},
		YEAR = {1944},
		PAGES = {10--15},
	}

\bibitem{ES} G. Ellingsrud and S. Str\o mme, \textit{On the homology of the {H}ilbert scheme of points in the plane}, Invent. Math. {\bf 87} (1987), no.\@ 2, 343--352.

\bib{Fulton}{book}{
	AUTHOR = {Fulton, W.},
	TITLE = {Intersection theory},
	SERIES = {Ergebnisse der Mathematik und ihrer Grenzgebiete (3) [Results
		in Mathematics and Related Areas (3)]},
	VOLUME = {2},
	PUBLISHER = {Springer-Verlag, Berlin},
	YEAR = {1984},
	PAGES = {xi+470},
}

\bibitem{Garvan} F. Garvan, \textit{New combinatorial interpretations of Ramanujan's partition congruences mod 5, 7,
and 11}, Trans. Amer. Math. Soc. {\bf 305} (1988), 47--77.

\bibitem{GGOLS} N. Gillman, X. Gonzalez, K. Ono, L. Rolen, and M. Schoenbauer, \textit{From partitions to {H}odge numbers of {H}ilbert schemes of
		surfaces}, Philos. Trans. Roy. Soc. A, {\bf 378} (2020), no.\@ 2163, 20180435, 13.

\bibitem{Got} L. G\"{o}ttsche, \textit{The {B}etti numbers of the {H}ilbert scheme of points on a smooth projective surface}, Math. Ann. {\bf 286} (1990), no.\@ 1-3, 193--207.

\bibitem{GreenTao} B. Green and T. Tao, \textit{The quantitative behaviour of polynomial orbits on nilmanifolds}, Ann. of Math. (2) {\bf 175} (2012), no.\@ 2, 465--540.

\bibitem{HaKrTs} A. Hamakiotes, A. Kriegman, and W. Tsai, \textit{Asymptotic distribution of the partition crank}, Ramanujan J. {\bf 56} (2021), no.\@ 3, 1--18.

\bibitem{HarRam} G. Hardy and S. Ramanujan, \textit{Asymptotic formulae in combinatory analysis}, Proc. London Math. Soc. Ser. 2 {\bf 17}
(1918), 75--115.

\bib{HJ}{article}{
	title = {Dyson's partition ranks and their multiplicative extensions},
	author = {Hou, E.},
	author={Jagadeesan, M.},
	journal = {Ramanujan J.},
	volume = {45},
	number = {3},
	pages = {817--839},
	year = {2018},
	publisher = {Springer}
}

\bib{JS}{article}{
	AUTHOR = {Jennings-Shaffer, C.},
	TITLE = {Rank and crank moments for partitions without repeated odd parts},
	JOURNAL = {Int. J. Number Theory},
	VOLUME = {11},
	YEAR = {2015},
	NUMBER = {3},
	PAGES = {683--703},
}

\bibitem{Kamenov-Mutafchiev} E. Kamenov and L. Mutafchiev, \textit{The limiting distribution of the trace of a random plane partition}, Acta Math. Hung. {\bf 117} (2007), no.\@ 4, 293--314.

\bibitem{Katz} N. Katz, \textit{Witt vectors and a question of Entin, Keating, and Rudnick}, Int. Math. Res. Not. IMRN (2015), no.\@ 14, 5959--5975

\bibitem{KimKimSeo} B. Kim, E. Kim, and J. Seo, \textit{Asymptotics for q-expansions involving partial theta functions}, Discrete Math. {\bf 338} (2015), no.\@ 2, 180--189.

\bibitem{MacMahon} P. MacMahon, \textit{Combinatory analysis, 2 Vols.}, (Cambridge University Press, Cambridge, 1915 and 1916; Reprinted in one volume: Chelsea, New York, 1960).  

\bibitem{Mahlburg} K. Mahlburg, \textit{Partition congruences and the Andrews-Garvan-Dyson crank}, Proc. Natl. Acad. Sci. USA {\bf 102} (2005), no.\@ 43, 15373--15376.

\bibitem{Males} J. Males, \textit{Asymptotic equidistribution and convexity for partition ranks}, Ramanujan J. {\bf54} (2021), no.\@ 2, 397--413.

\bibitem{ManRol} J. Manschot and J. Zapata Rol\'{o}n, \textit{The asymptotic profile of {$\chi_y$}-genera of {H}ilbert
	schemes of points on {K}3 surfaces}, Commun. Number Theory Phys. {\bf 9} (2015), no.\@ 2, 413--436.

\bibitem{Mao} R. Mao, \textit{Asymptotic formulas for spt-crank of partitions}, J. Math. Anal. Appl. {\bf 460} (2018), no.\@ 1, 121--139.

\bibitem{Mutafchiev} L. Mutafchiev, \textit{Asymptotic analysis of expectations of plane partition statistics}, Abh. Math. Semin. Univ. Hambg. {\bf 88} (2018), 255--272.

\bib{NaYo}{book}{
	AUTHOR = {Nakajima, H.},
	author={Yoshioka, K.},
	TITLE = {Perverse coherent sheaves on blow-up. {I}. {A} quiver
		description},
	BOOKTITLE = {Exploring new structures and natural constructions in
		mathematical physics},
	SERIES = {Adv. Stud. Pure Math.},
	VOLUME = {61},
	PAGES = {349--386},
	PUBLISHER = {Math. Soc. Japan, Tokyo},
	YEAR = {2011},
}

\bibitem{NaYo2} H. Nakajima and K. Yoshioka, \textit{Perverse coherent sheaves on blow-up. {II}. {W}all-crossing
	and {B}etti numbers formula}, J. Algebraic Geom. {\bf 20} (2011), no.\@ 1, 47--100.

\bibitem{NgoRhoades} H. Ngo and R. Rhoades, \textit{Integer Partitions, Probabilities and Quantum Modular Forms}, Res. Math. Sci. \textbf{4} (2017), Paper No.\@ 17, 36 pp.

\bibitem{OnPu} K. Ono and S. Pujahari, \textit{Turan Inequalities for the plane partition function}, preprint.

\bib{OppShu}{article}{
	title={Squarefree polynomials with prescribed coefficients},
	author={Oppenheim, A.} 
	author={Shusterman, M.},
	journal={J. Number Theory},
	volume={187},
	pages={189--197},
	year={2018},
	publisher={Elsevier}
}

\bibitem{Ramanujan} S. Ramanujan, \textit{Congruence properties of partitions}, Math. Z. {\bf 9} (1921), 147--153.

\bibitem{Stanley} R. Stanley, \textit{The conjugate trace and trace of a plane partition}, J. Combinatorial Theory Ser. A {\bf 14} (1973), 53--65.

\bibitem{Wright3} E. Wright, \textit{Asymptotic partition formulae I: Plane partitions}, Q. J. Math. (1931), no. 1, 177--189.

\bibitem{Wright1} E. Wright, \textit{Stacks}, Quart. J. Math. Oxford Ser. {\bf 19} (1968), no.\@ 2, 313--320. 

\bibitem{Wright2} E. Wright, \textit{Stacks. II}, Quart. J. Math. Oxford Ser. {\bf 22} (1971), no.\@ 2, 107--116.

\bibitem{Xi} P. Xi, \textit{When Kloosterman sums meet Hecke eigenvalues}, Invent. Math. {\bf 220} (2020), no.\@ 1, 61--127.

\bibitem{Rolon} J. Zapata Rol\'{o}n, \textit{Asymptotics of crank generating functions and {R}amanujan
	congruences}, Ramanujan J. {\bf 38} (2015), no.\@ 1, 147--178.

\bib{Zhou}{article}{
	AUTHOR = {Zhou, N. },
	TITLE = {Note on partitions into polynomials with number of parts in an
		arithmetic progression},
	JOURNAL = {Int. J. Number Theory},
	FJOURNAL = {International Journal of Number Theory},
	VOLUME = {17},
	YEAR = {2021},
	NUMBER = {9},
	PAGES = {1951--1963},
}

	\end{biblist}
\end{bibsection}

\end{document}